\documentclass[11pt]{amsart}
\usepackage{amsmath,amssymb, amscd}
\usepackage[all,dvips]{xypic}
\usepackage{times}
\usepackage{enumerate} 

\newcommand{\colim}{{\rm colim}}
\newcommand{\CB}{{\rm CB}}
\newcommand{\CH}{{\rm CH}}
\newcommand{\Hom}{{\rm Hom}}
\newcommand{\Ext}{{\rm Ext}}

\newcommand{\Ch}{{\rm\bf Ch}}

\newcommand{\tl}{\ensuremath{\triangleleft}}
\newcommand{\tr}{\ensuremath{\triangleright}}

\newcommand{\C}[1]{\ensuremath{\mathcal{#1}}}
\newcommand{\B}[1]{\ensuremath{\mathbb{#1}}}
\newcommand{\lmod}[1]{\ensuremath{{#1}\text{-{\bf Mod}}}}
\newcommand{\rmod}[1]{\ensuremath{\text{{\bf Mod}-}{#1}}}

\newcommand{\xra}[1]{\ensuremath{\xrightarrow{\ {#1}\ }}}
\newcommand{\xla}[1]{\ensuremath{\xleftarrow{\ {#1}\ }}}

\setlength{\textwidth}{162mm}
\setlength{\textheight}{222mm}
\setlength{\parindent}{5mm}
\setlength{\oddsidemargin}{1mm}
\setlength{\evensidemargin}{1mm}
\setlength{\parskip}{1.15mm}

\newtheorem{theorem}{Theorem}[section]

\newtheorem{lemma}[theorem]{Lemma}
\newtheorem{proposition}[theorem]{Proposition}

\theoremstyle{definition}
\newtheorem{definition}[theorem]{Definition}
\newtheorem{remark}[theorem]{Remark}

\title{Uniqueness of pairings in Hopf-cyclic cohomology}
\email{kaygun@mpim-bonn.mpg.de}
\address{Max-Planck-Institut f\"ur Mathematik, Vivatsgasse 7, Bonn 53111, Germany}
\author{Atabey Kaygun}

\begin{document}
\maketitle

\begin{abstract}
  We show that all pairings defined in the literature extending
  Connes-Moscovici characteristic map in Hopf-cyclic cohomology are
  isomorphic as natural transformations of derived double functors.
\end{abstract}

\section{Introduction}

The category of algebras over a fixed ground ring is not an abelian
category.  This means we are denied the use of the amenities provided
by the classical homological algebra on the category of algebras and
their morphisms directly.  One way around this problem is to find a
``good'' (faithful) functor from the category of algebras into an
abelian category and apply the tools of homological algebra on the
image of this functor.  Hochschild and cyclic (co)homology are
examples of this type where we use $\lmod{\Delta}$ the abelian
category of simplicial modules for the former and $\lmod{\Lambda}$ the
abelian category of cyclic modules for the latter.

One recurring problem in homological algebra is the task of showing
certain functors $F_*, G_*\colon {\bf D}(\C{A})\to \lmod{k}$ on the
derived category ${\bf D}(\C{A})$ of an abelian category $\C{A}$ are
isomorphic.  This task can be accomplished by finding isomorphic
functors $F$ and $G$ which are defined on the underlying abelian
category $\C{A}$, and showing that $F_*$ and $G_*$ are obtained as
{\em derivatives} of these functors.  If our functors $F$ and $G$ are
defined only on a non-abelian subcategory, this approach will not
work.  The example we have in mind is the category of (co)cyclic
modules and the subcategory of (co)cyclic modules coming from the
image of the functor $Z\mapsto Cyc_\bullet(Z)$ sending an (co)algebra
to its canonical (co)cyclic module.

In the context of cyclic (co)homology of (co)algebras there are
various derived categories used in the literature.  We note (i) the
derived category of cyclic modules~\cite{Connes:ExtFunctors}, (ii) the
derived category of mixed
complexes~\cite{JonesKassel:BivariantCyclicTheory} and (iii) the
homotopy category of towers of super
complexes~\cite{CuntzQuillen:NonsingularityII} where the last two of
are homotopy equivalent by~\cite{Quillen:CyclicHomologyType}.  In this
paper, we add another derived category to this list: ${\bf
  D}(\lmod{(\Lambda,{\bf T})})$ the relative derived category of
cyclic modules to implement Connes' very first definition of cyclic
cohomology \cite[Sect. I, Def. 2]{Connes:NonCommutativeGeometry} as a
derived functor in Theorem~\ref{Equivalent}.  This derived category
will allow us to form a bridge between isomorphic pairings defined in
(i) and (ii)~\cite{Kaygun:CupProduct}, and other pairings defined in
the literature.

Since Hopf algebras play the role of symmetries of noncommutative
spaces, and Hopf-cyclic cohomology extends group and Lie algebra
cohomology~\cite{ConnesMoscovici:HopfCyclicCohomology,
  ConnesMoscovici:HopfCyclicCohomologyIa,
  ConnesMoscovici:HopfCyclicCohomologyII, HKRS:HopfCyclicHomology},
one should expect existence of cup products in Hopf-cyclic cohomology.
There are numerous such products and pairings in the literature
\cite{Gorokhovsky:SecondaryCharacteristicClasses,
  Crainic:CyclicCohomologyOfHopfAlgebras,
  KhalkhaliRangipour:CupProducts, Kaygun:CupProduct,
  Sharygin:CupProducts, Rangipour:CupProductsI} which extend the
characteristic map defined by Connes and
Moscovici~\cite{ConnesMoscovici:HopfCyclicCohomology}.  Let $H$ be a
Hopf algebra, $A$ be a $H$-module algebra and $M$ be an arbitrary
$H$-module/comodule.  In this paper we prove that the pairings and cup
products we enumerated above which extended Connes-Moscovici
characteristic map
\[ HC_{\rm Hopf}^p(H,M)\otimes HC_{\rm Hopf}^0(A,M)\to HC^p(A) \] are
isomorphic natural transformations of isomorphic double functors
defined in their ambient derived categories.  Since our setup uses
module (co)algebras over a fixed base Hopf-algebra, in addition to the
canonical functor $Cyc_\bullet$ which associates an ordinary
(co)algebra a (co)cyclic module, we employ another functor
$C_\bullet$~\cite[Def. 4.7]{Kaygun:BivariantHopf} which is defined
from the category of (co)module (co)algebras in to the category of
(co)cyclic modules.  Because the category of algebras, and therefore
the subcategory $im(C_\bullet)$ of the category of (co)cyclic modules,
is not abelian we will achieve our objective by finding isomorphic
functors on the full double category
$\lmod{\Lambda}\times\lmod{\Lambda}$ whose derivatives on ${\bf
  D}(\lmod{\Lambda})\times{\bf D}(\lmod{\Lambda})$ restricted to ${\bf
  D}(im(C_\bullet))\times {\bf D}(im(C_\bullet))$ yield the pairings
we are interested.  Thus we reduce the task of showing these pairings
are isomorphic in various derived categories, to showing that they
come from isomorphic double functors on the abelian category of
(co)cyclic modules.

Here is a plan of our paper.  In Section~\ref{Relative}, we will
recall few relevant facts about relative (co)homology \`a la
Hochschild~\cite{Hochschild:RelativeHomology}.  In
Section~\ref{DoubleEverything} we develop the necessary machinery for
products of abelian categories and their derived categories.  We use
this machinery in Section~\ref{DiagramModules} to develop a universal
pairing using the double functor $diag_\bullet\Hom_k(\ \cdot\ ,\
\cdot\ )$ we used extensively in \cite{Kaygun:CupProduct} for Connes'
cyclic category $\Lambda$, this time for modules over an arbitrary
small category ${\bf C}$.  By allowing the base category to change,
one can get similar pairing and products for other (co)homology
theories.  In Section~\ref{CyclicCohomology} we prove that the
relative derived category ${\bf D}(\lmod{(\Lambda,{\bf T})})$
implements cyclic (co)homology via cyclic invariant Hochschild
cochains.  We also construct a comparison functor ${\bf
  D}(\lmod{\Lambda})\to {\bf D}(\lmod{(\Lambda,{\bf T})})$ between the
derived category of (co)cyclic modules and the relative derived
category of (co)cyclic modules, which is a homotopy equivalence for a
fixed ground field $k$ of characteristic $0$.  Finally, in
Section~\ref{Pairings} we prove our uniqueness result as we outlined
above.

In this paper we fix a ground field $k$.  We will assume that
$char(k)=0$.  All unadorned tensor products $\otimes$ are taken over
$k$.  All categories we will use are assumed to be small.  All abelian
categories are assumed to have enough injectives and projectives.

\subsection{Acknowledgement}

Most of this article is written during my year-long stay at Ohio State
University, but I finished writing the main argument during my stay at
the Max Planck Institute in Bonn.  I would like to thank both
institutions for their generous support and hospitality.  I thank
Antoine Touze for his help on Lemma~\ref{Agreement}, and Bahram
Rangipour for explaining few key points
in~\cite{KhalkhaliRangipour:CupProducts}.  Last but not the least, I
would like to thank Henri Moscovici for the discussions we had about
this work, and many other things, over our daily coffee breaks.

\section{Relative (Co)Homology}\label{Relative}

In this section we assume $R$ is a unital associative $k$-algebra.
Most of this material can be found in
\cite{Hochschild:RelativeHomology}.

\begin{definition}
  Let $S$ be a subalgebra of $R$.  A morphism of $R$-modules $f\colon
  X\to Y$ is called an $(R,S)$-epimorphism (resp.
  $(R,S)$-monomorphism) if (i) $f$ is an epimorphism
  (resp. monomorphism) of $R$-modules and (ii) $f$ is a split
  epimorphism (resp. monomorphism) of $S$-modules.  A short exact
  sequence $0\to X \xra{v} Y \xra{u} Z \to 0$ of $R$-modules is called
  $(R,S)$-exact if $u$ is an $(R,S)$-epimorphism and $v$ is an
  $(R,S)$-monomorphism.
\end{definition}

\begin{definition}
  An $R$-module $P$ is called an $(R,S)$-projective module if for any
  $(R,S)$-epimorphism $u\colon X\to Y$ and morphism of $R$-modules
  $p\colon P\to Y$ one can find $\tilde{p}\colon P\to X$ such that 
  $\tilde{p}\circ u = p$.
  \begin{equation*}
    \SelectTips{eu}{12}\xymatrix{
    & P \ar[d]^p\ar@{.>}[dl]_{\tilde{p}}\\
    X \ar[r]_u & Y \ar[r] & 0 }
  \end{equation*}
\end{definition}

\begin{definition}
  A $k$-algebra $S$ is called semi-simple if every monomorphism of
  $S$-modules, or equivalently every epimorphism of $S$-modules,
  splits.
\end{definition}

This definition immediately implies the following
\begin{lemma}\label{SemiSimple}
  Let $S$ be a semi-simple subalgebra of $R$.  Then the class of 
  $(R,S)$-epimorphisms and $(R,S)$-monomorphisms coincide with the
  class of ordinary epimorphisms and ordinary monomorphisms 
  of $R$-modules, respectively.
\end{lemma}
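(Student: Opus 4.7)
The plan is to show the two inclusions by direct unpacking of definitions, using semi-simplicity only to upgrade ``ordinary'' $R$-(epi/mono)morphisms to ones that split after restriction of scalars to $S$.

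First I would handle the nontrivial direction for epimorphisms. Let $f\colon X\to Y$ be an ordinary epimorphism of $R$-modules. Restriction of scalars along the inclusion $S\hookrightarrow R$ gives a functor from $R$-modules to $S$-modules which forgets only the action, hence does not change the underlying $k$-linear map; in particular $f$ remains surjective, i.e.\ it is an epimorphism of $S$-modules. Now semi-simplicity of $S$ says every $S$-epimorphism splits, so there is an $S$-linear section $s\colon Y\to X$ with $f\circ s=\mathrm{id}_Y$. Combining with the fact that $f$ is an $R$-epimorphism, this exhibits $f$ as an $(R,S)$-epimorphism. The reverse inclusion is immediate from the definition, since an $(R,S)$-epimorphism is in particular an $R$-epimorphism.

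The monomorphism case is entirely dual. Given an ordinary $R$-monomorphism $v\colon X\to Y$, restriction of scalars turns it into an $S$-monomorphism, which splits by the equivalent formulation of semi-simplicity; together with $v$ being an $R$-monomorphism this makes it an $(R,S)$-monomorphism. The reverse inclusion is again built into the definition.

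There is essentially no obstacle here; the only small point to be careful about is the observation that restriction of scalars preserves epimorphisms and monomorphisms (because both are detected on underlying maps of sets), so the semi-simplicity hypothesis on $S$ is all that is needed to produce the required $S$-linear splittings.
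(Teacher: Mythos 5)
Your proof is correct and is exactly the argument the paper intends: the paper offers no explicit proof, prefacing the lemma only with ``This definition immediately implies the following,'' and your unpacking --- restriction of scalars preserves surjectivity/injectivity, semi-simplicity of $S$ then supplies the $S$-linear splitting, and the reverse inclusions are built into the definition --- is precisely that immediate implication spelled out.
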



\begin{proposition}\label{CutsLikeAKnife}
  Assume $S$ is a semi-simple subalgebra of an algebra $R$.  Then
  ordinary Tor groups ${\rm Tor}^R_*(X,Y)$ and relative Tor groups
  ${\rm Tor}^{(R,S)}_*(X,Y)$ are naturally isomorphic for an arbitrary
  right $R$-module $X$ and an arbitrary left $R$-module $Y$.
  Similarly, ordinary Ext groups $\Ext_R^*(X,Z)$ and the relative Ext
  groups $\Ext_{(R,S)}^*(X,Z)$ are naturally isomorphic for an
  arbitrary pair of $R$-modules $(X,Z)$ of the same parity.
\end{proposition}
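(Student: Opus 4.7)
The plan is to reduce everything to Lemma~\ref{SemiSimple} and then observe that the two derived-functor constructions use the same class of resolutions.

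First, I would use Lemma~\ref{SemiSimple} to note that the class of $(R,S)$-exact sequences coincides with the class of ordinary short exact sequences of $R$-modules: given any short exact sequence $0\to X\to Y\to Z\to 0$ of $R$-modules, restricting scalars to $S$ gives a short exact sequence of $S$-modules, which splits automatically by semi-simplicity of $S$. Thus $(R,S)$-exactness imposes no additional condition.

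Next I would compare projective (and injective) classes. Since every $R$-module epimorphism is already an $(R,S)$-epimorphism, the lifting property defining an $(R,S)$-projective becomes precisely the usual lifting property defining a projective $R$-module; hence the two notions of projective coincide. The same argument, dually, identifies $(R,S)$-injectives with ordinary injective $R$-modules. In particular every ordinary projective resolution of a right $R$-module $X$ is an $(R,S)$-projective resolution, and similarly every ordinary injective resolution is an $(R,S)$-injective resolution.

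Finally, because the relative Tor and Ext groups are by definition computed as the homology of $-\eotimes{R} Y$ (resp.\ $\Hom_R(-,Z)$) applied to any $(R,S)$-projective (resp.\ $(R,S)$-injective) resolution, and these resolutions are exactly the ordinary projective/injective resolutions by the previous step, we conclude
\[ \Tor^{(R,S)}_*(X,Y) \cong \Tor^R_*(X,Y), \qquad \Ext_{(R,S)}^*(X,Z)\cong \Ext_R^*(X,Z), \]
naturally in the arguments. There is no real obstacle: the only thing to check is that the naturality of these isomorphisms is the obvious one coming from the identity on resolutions, which is immediate.
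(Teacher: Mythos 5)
Your argument is correct and follows essentially the same route as the paper: both proofs come down to the observation, via Lemma~\ref{SemiSimple}, that semi-simplicity of $S$ makes the class of $(R,S)$-exact sequences and the class of $(R,S)$-projectives coincide with their absolute counterparts, so the relative and ordinary derived functors are computed from the same (allowable) resolutions. The only differences are cosmetic --- the paper realizes the comparison map explicitly through the ordinary and relative bar complexes, and in your last step $\Hom_R(-,Z)$ should be applied to a projective resolution of $X$ rather than to an injective resolution of $Z$, a slip that does not affect the argument.
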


\begin{proof}
  Let $X$ be a right $R$-module and let $Y$ be a left $R$-module.
  Since $k$ is a field, the homology of the two sided bar complex
  $\CB_*(X,R,Y)$ which is $\bigoplus_{n\geq 0} X\otimes R^{\otimes
    n}\otimes Y$ with the differentials
  \begin{align*}
    d^\CB_n(x\otimes r_1\otimes\cdots\otimes r_n\otimes y)
     = & (x r_1\otimes r_2\otimes\cdots\otimes r_n\otimes y)\\
       & + \sum_{j=1}^{n-1}(-1)^j (x\otimes\cdots\otimes r_j r_{j+1}
           \otimes\cdots\otimes y)\\
       & + (-1)^n (x\otimes r_1\otimes\cdots\otimes r_{n-1}\otimes r_n y)
  \end{align*}
  is defined for any $n\geq 1$ gives ${\rm Tor}^R_*(X,Y)$.  Using
  \cite[Lem. 2, pg. 248]{Hochschild:RelativeHomology}, we see that for
  any right $R$-module $X$, the module $X\otimes_S R$ is a
  $(R,S)$-projective module.  This implies the relative two sided bar
  complex $\CB_*(X,R|S,Y)$ which is defined as $\bigoplus_{n\geq 0}
  X\otimes_S R^{\otimes_S n}\otimes_S Y$ with the differentials
  induced from the ordinary bar complex yields the relative Tor groups
  ${\rm Tor}^{(R,S)}_*(X,Y)$.  More importantly, there is a comparison
  natural transformation between the derived functors
  \[ c^{X,Y}_*\colon {\rm Tor}^R_*(X,Y)\to {\rm Tor}^{(R,S)}_*(X,Y) \]
  There are similar comparison morphisms between the derived functors
  \[ c_{X,Z}^*\colon \Ext_{(R,S)}^*(X,Z)\to \Ext_R^*(X,Z) \] since
  $\Ext_{(R,S)}^*(X,Z) = H^* \Hom_R(\CB_*(X,R|S,R),Z)$ for an
  arbitrary pair of $R$-modules $(X,Z)$ of the same parity.  If $S$ is
  a semi-simple $k$-algebra then the class of $(R,S)$-projective
  modules coincide with the class of $R$-projective modules because of
  Lemma~\ref{SemiSimple}.  Then the comparison natural transformations
  are isomorphisms.
\end{proof}

\section{Product Categories and Double Functors}\label{DoubleEverything}

\begin{definition}
  Let $\C{U}$ and $\C{V}$ be two $k$-linear small categories.  Define
  a new category $\C{U}\otimes\C{V}$ as follows.  The set of objects
  of $\C{U}\otimes\C{V}$ are pairs of the form $(U,V)$ with $U\in
  Ob(\C{U})$ and $V\in Ob(\C{V})$.  Given two objects $(U,V)$ and
  $(U', V')$ in $Ob(\C{U}\otimes\C{V})$ we define
  \[ \Hom_{\C{U}\otimes\C{V}}((U, V),(U',V')) 
    := \Hom_\C{U}(U,U')\otimes \Hom_\C{V}(V,V')
  \]
  The compositions are defined as $(f\otimes f')\circ(g\otimes g') =
  f\circ g\otimes f'\circ g'$ if $f\otimes f'$ and $g\otimes g'$ are
  two composable morphisms.  Note that we always have
  \[ f\otimes g = (f\otimes t(g))\circ (s(f)\otimes g) = (t(f)\otimes
  g)\circ (f\otimes s(g))\] for any $f\in \Hom_\C{U}$ and
  $g\in\Hom_\C{V}$ where we use $s(\alpha)$ and $t(\alpha)$ to denote
  the source and the target of and morphism $\alpha$.  We also use the
  convention that the symbol for an object also denotes the identity
  morphism on the same object.
\end{definition}

\begin{remark}
  Note that when $\C{U}$ and $\C{V}$ are $k$-linear abelian
  categories, the product category $\C{U}\otimes\C{V}$ is a $k$-linear
  category, but not necessarily an abelian category.  For each object
  $U\in Ob(\C{U})$ and $V\in Ob(\C{V})$ we have subcategories
  $\C{U}\otimes V$ and $U\otimes\C{V}$ of $\C{U}\otimes\C{V}$ which
  consists of objects
  \[ Ob(\C{U}\otimes V) := \left\{(U,V)|\ U\in Ob(\C{U})\right\} 
  \quad\text{  and  }\quad
     Ob(U\otimes\C{V}) := \left\{(U,V)|\ V\in Ob(\C{V})\right\} 
  \]
  where the morphisms are
  \[ \Hom_{\C{U}\otimes V}((U,V),(U',V)) = \Hom_\C{U}(U,U')\otimes k\{id_V\} \]
  and 
  \[ \Hom_{U\otimes\C{V}}((U,V),(U,V')) = k\{id_U\}\otimes \Hom_\C{U}(V,V') \]
  These subcategories are abelian.
\end{remark}

\begin{definition}
  A $k$-linear functor of the form $F\colon\C{U}\otimes\C{V}\to \C{W}$
  is called a $k$-linear double functor.  Such a double functor is
  called exact (resp. left exact or right exact) if $\C{U}$, $\C{V}$
  and $\C{W}$ are $k$-linear abelian categories, and the restriction
  functors
  \[ F_U\colon U\otimes\C{V}\to \C{W}
     \quad\text{ and }\quad
     F_V\colon \C{U}\otimes V\to \C{W}
  \]
  are both exact (resp. left exact or right exact) for any $U\in
  Ob(\C{U})$ and $V\in Ob(\C{V})$.
\end{definition}

\begin{definition}
  Assume $\C{U}$ is an abelian category.  Consider the category
  $\Ch(\C{U})$ of differential $\B{Z}$-graded objects in $\C{U}$ with
  differentials of degree $-1$, and the graded morphisms of
  $\B{Z}$-graded objects between these objects.  This choice of
  morphisms makes each Hom set $\Hom_{\Ch(\C{U})}(U_*,U'_*)$ into a
  differential graded $k$-module as follows: for any $n\in\B{Z}$ we
  let
  \[ \Hom_{\Ch(\C{U})}^n(U_*,U_*') := \prod_{m\in\B{Z}} \Hom_\C{U}(U_m,U'_{m+n})  \]
  The differentials on the Hom modules are given by
  \[ d^n_\C{U}(f_*) := \left(d'_{m+n}\circ f_{m} +
    (-1)^{n+1} f_{m+n-1}\circ d_{m}\right)_{m\in\B{Z}} \] for any
  $f_*\in \Hom_{\Ch(\C{U})}(U_*,U_*')$ where $d_*$ is the differential
  of $U_*$ and $d_*'$ is the differential of $U_*'$.  One can easily
  show that the composition of morphisms is graded and the
  differentials $d^*_{\C{U}}$ satisfy Leibniz rule with respect to
  compositions, i.e.
  \[ d^{i+j}_{\C{U}}(f_*\circ g_*) = d^i_{\C{U}}(f_*)\circ g_* +
  (-1)^i f_*\circ d^j_{\C{U}}(g_*) \] for a composable pair of
  morphisms $f_*$ and $g_*$ of degree $i$ and $j$ respectively.  We
  observe that $ker(d^0_{\C{U}})$ consists of morphisms of
  $\B{Z}$-graded objects of degree $0$ which commute with their
  differentials, and $im(d^1_\C{U})$ consists of morphisms of
  differential graded objects which are null-homotopic.  Thus using
  Hom sets of the form $H_0\Hom^*_{\Ch(\C{U})}(U_*,U_*')$ gives us the
  quotient of the category of differential graded objects and their
  degree 0 morphisms in $\C{U}$ by the subcategory of null-homotopic
  morphisms.  This category is usually denoted by ${\bf K}(\C{U})$.
\end{definition}

\begin{lemma}
  Assume $\C{W}$ has countable products (i.e. limits over countable
  discrete categories), and that such products are exact.  If $F\colon
  \C{U}\otimes\C{V}\to \C{W}$ is a $k$-linear double functor then $F$
  induces a functor of the form ${\bf K}(F)\colon {\bf
    K}(\C{U})\otimes {\bf K}(\C{V}) \to {\bf K}(\C{W})$ which is
  defined on the objects by
  \[ {\bf K}(F)(U_*,V_*):= Tot^\Pi_* F(U_*,V_*) \] for any $U_*\in
  Ob(\Ch(\C{U}))$ and $V_*\in Ob(\Ch(\C{V}))$.
\end{lemma}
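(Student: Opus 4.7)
The plan is to work in two stages: first build a $k$-linear functor $\Ch(F)\colon\Ch(\C{U})\otimes\Ch(\C{V})\to\Ch(\C{W})$ at the level of chain complexes, then verify that it descends to the homotopy quotients. On objects, one sets $\Ch(F)(U_*,V_*):=Tot^\Pi_* F(U_*,V_*)$; the existence of countable products in $\C{W}$ guarantees that $\prod_{p+q=n}F(U_p,V_q)$ is a well-defined object in each degree $n$, and the total differential is assembled, with the usual Koszul sign, out of the two partial differentials $F(d^U_p,V_q)$ and $F(U_p,d^V_q)$.

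For morphisms it suffices, by $k$-bilinearity, to treat pure tensors $f_*\otimes g_*$ of degree-$0$ chain maps and to declare $\Ch(F)(f_*\otimes g_*)$ to be the degree-$0$ morphism whose $(p,q)$-component is $F(f_p\otimes g_q)\colon F(U_p,V_q)\to F(U'_p,V'_q)$. Using the two factorisations
\[ f\otimes g = (f\otimes t(g))\circ(s(f)\otimes g) = (t(f)\otimes g)\circ(f\otimes s(g)) \]
available in $\C{U}\otimes\C{V}$, together with the chain-map conditions on $f_*$ and $g_*$, one checks component by component that $\Ch(F)(f_*\otimes g_*)$ commutes with the total differential. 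Functoriality at the chain level is then immediate from the functoriality of $F$ on each bidegree.

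To descend to ${\bf K}$, I need to show the assignment is well-defined on $H_0\Hom^*_{\Ch(\C{U})}(U_*,U'_*)\otimes H_0\Hom^*_{\Ch(\C{V})}(V_*,V'_*)$. By bilinearity this reduces to showing that whenever $f_*=d^0_{\C{U}}(h_*)$, so that $f_p=d^{U'}_{p+1}\circ h_p+h_{p-1}\circ d^U_p$, then $\Ch(F)(f_*\otimes g_*)$ is null-homotopic for every chain map $g_*$. The natural candidate is the degree-$1$ map $H$ whose $(p+1,q)$-component is $F(h_p\otimes g_q)$; a direct computation of $\partial H+H\partial$, exploiting the chain-map property of $g_*$, shows that the two cross terms arising from $F(U',d^{V'})$ and $F(h,d^V)$ carry opposite Koszul signs and cancel, while the remaining horizontal terms collapse component-wise to $F(d^{U'}h+hd^U,g)=F(f,g)$. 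The symmetric argument handles null-homotopies on the $V$-factor.

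The main technical obstacle is precisely this sign bookkeeping: choosing the Koszul convention on $Tot^\Pi_*$ and on the homotopy $H$ so that the cross terms cancel is what makes the construction descend to ${\bf K}$. Once well-definedness is verified, functoriality of ${\bf K}(F)$ is inherited from $\Ch(F)$, since composition in ${\bf K}$ is induced from composition in $\Ch$, and the relations $(f\otimes g)\circ(f'\otimes g')=(f\circ f')\otimes(g\circ g')$ in the product category transport cleanly through $F$ and through the product assembly.
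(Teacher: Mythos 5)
Your proposal is correct and follows essentially the same route as the paper: both arguments reduce the descent to ${\bf K}$ to two checks, namely that chain maps in each variable are sent to chain maps of the total complex, and that a null-homotopic map in either factor yields a null-homotopic map after applying $Tot^\Pi_* F$. The only cosmetic difference is that you exhibit the contracting homotopy $F(h_p,g_q)$ explicitly, whereas the paper packages the same computation into the graded Leibniz rule $d^{i+j}_{\C{W}}Tot^\Pi_* F(f_*,g_*) = Tot^\Pi_* F(d^i_{\C{U}} f_*,g_*) + (-1)^i Tot^\Pi_* F(f_*, d^j_{\C{V}} g_*)$ on the Hom complexes.
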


\begin{proof}
  First, let us describe $\Ch(\C{U})\otimes\Ch(\C{V})$.  Its objects
  are pairs of the form $(U_*,V_*)$ where $U_*$ and $V_*$ are from
  $\Ch(\C{U})$ and $\Ch(\C{V})$ respectively.  The bi-graded
  $k$-module of morphisms between two objects $(U_*,V_*)$ and
  $(U'_*,V'_*)$ is defined to be
  \begin{align*}
    \Hom_{\Ch(\C{U})\otimes\Ch(\C{V})}^{i,j} & ((U_*,V_*),(U_*',V_*')) 
     := \prod_{p,q\in\B{Z}}
        \Hom_{\C{U}\otimes\C{V}}((U_p,V_q),(U'_{p+i},V'_{q+j}))\\
     := & \prod_{p,q\in\B{Z}}
          \Hom_{\C{U}}(U_p,U'_{p+i})\otimes \Hom_{\C{V}}(V_q,V'_{q+j})
  \end{align*}
  Now observe that $F(U_*,V_*)$ is also a bi-differential object in
  $\C{W}$ with horizontal and vertical differentials for all
  $p,q\in\B{Z}$ are defined as
  \[ d^h_{p,q}:= F(d_p, V_q) \quad\text{ and }\quad d^v_{p,q}:= F(U_p,
  d_q) \] 
  It is easy to see that if $f_*\in \Hom_{\Ch(\C{U})}^i(U_*,U'_*)$ and
  $g_*\in \Hom_{\Ch(\C{V})}^j(V_*,V'_*)$ then
  \[ Tot^\Pi_* F(f_*,g_*) \in \Hom_{\Ch(\C{W})}^{i+j}(Tot^\Pi_*
  F(U_*,V_*), Tot^\Pi_* F(U'_*,V'_*)) \] since $F(f_*,g_*) =
  F(f_*,V'_*)\circ F(U_*,g_*)$ and the composition of morphisms in
  $\Ch(\C{W})$ is graded.  Since the differential on the total complex
  $Tot^\Pi_*F(U_*,V_*)$ is defined as the sum $d^h_{*,*}+d^v_{*,*}$ we
  see that
  \[ d^{i+j}_{\C{W}} Tot^\Pi_m F(f_*,g_*) := \left( (d^h_{p+i,q+j}
    + d^v_{p+i,q+j})F(f_p,g_q) + (-1)^{i+j+1} F(f_p,g_q)(d^v_{p+1,q} +
    d^h_{p,q+1}) \right)_{p+q=m}
  \]
  where $f_*$ and $g_*$ are as before.  One can reduce this formula
  further to
  \begin{align*}
    d^{i+j}_{\C{W}}Tot^\Pi_* F(f_*,g_*) = Tot^\Pi_* F(d^i_{\C{U}} f_*,g_*)
    + (-1)^i Tot^\Pi_* F(f_*, d^j_{\C{V}} g_*)
  \end{align*}
  In order to have a well-defined functor ${\bf K}(F)$, one must have
  the following conditions satisfied.
  \begin{enumerate}[(i)]
  \item If $f_*$ is in $ker(d^0_{\C{U}})$ and $g_*$ is in
    $ker(d^0_{\C{V}})$ then $Tot^\Pi_* F(f_*,g_*)$ is in
    $ker(d_{\C{W}}^0)$. 
  \item For any $a_*\in \Hom_{\Ch(\C{U})}^1(U_*,U'_*)$, $c_*\in
    ker(d^0_{\C{U}})$, $b_*\in ker(d^0_{\C{V}})$ and
    $e_*\in \Hom_{\Ch(\C{V})}^1(V_*,V'_*)$ one must have
    \[ Tot^\Pi_* F(d^1_{\C{U}}(a_*),b_*) + Tot^\Pi_* F(c_*,
    d^1_{\C{V}}(e_*)) \] in the image of $d_{\C{W}}^1$.
  \end{enumerate}
  In order to prove (i) we compute
  \[ d^0_{\C{W}} Tot^\Pi_* F(f_*,g_*)
    =  Tot^\Pi_* F(d^0_{\C{U}} f_*, g_*) +  Tot^\Pi_* F(f_*, d^0_{\C{V}} g_*) = 0
  \]
  For (ii) we observe 
  \begin{align*}
    d^1_{\C{W}} & \left(Tot^\Pi_* F(a_*,b_*) + Tot^\Pi_* F(c_*,e_*)\right) \\
    = & Tot^\Pi_* F(d^1_{\C{U}} a_*, b_*) - Tot^\Pi_* F(a_*, d^0_{\C{V}} b_*)
       + Tot^\Pi_* F(d^0_{\C{U}} c_*, e_*) + Tot^\Pi_* F(c_*, d^1_{\C{V}} e_*)\\
    = & Tot^\Pi_* F(d^1_{\C{U}}(a_*),b_*) + Tot^\Pi_* F(c_*, d^1_{\C{V}}(e_*)) 
  \end{align*}
  as we wanted to prove.
\end{proof}

\begin{remark}
  Given a functor between two small $k$-linear abelian categories
  $G\colon \C{U}\to \C{V}$, we will get a functor of the form
  $\Ch(G)\colon \Ch(\C{U})\to \Ch(\C{V})$.  Both $\Ch(\C{U})$ and
  $\Ch(\C{V})$ are categories enriched over the category of
  differential graded $k$-modules, i.e. they are differential graded
  $k$-linear categories and $\Ch(G)$ is a functor of differential
  graded $k$-linear
  categories~\cite{Keller:OnDifferentialGradedCategories}.  Now, given
  a double functor $F\colon \C{U}\otimes\C{V} \to \C{W}$, one can show
  that $\Ch(F)\colon \Ch(\C{U})\otimes\Ch(\C{V})\to \Ch(\C{W})$ is a
  functor of differential graded $k$-linear categories with a little
  work.  Since the domain is a product category which is normally
  enriched over the category of bi-differential bi-graded $k$-modules,
  the maps on the Hom $k$-modules need to be interpreted carefully
  using total complexes on the Hom $k$-modules.  Then the result we
  prove above is equivalent to the existence of a functor of the form
  \[ H_{p,q}(F)\colon H_p\Ch(\C{U})\otimes H_q\Ch(\C{V}) \to
  H_{p+q}\Ch(\C{W}) \] for $(p,q)=(0,0)$, which actually holds for all
  $(p,q)$.
\end{remark}

\begin{theorem}\label{NaturalTransformation}
  Assume $\C{W}$ has countable products and that such products are
  exact.  If $F\colon \C{U}\otimes\C{V}\to \C{W}$ is a $k$-linear
  exact double functor then ${\bf K}(F)$ extends to a functor on the
  product of the derived categories of the form ${\bf D}(F)\colon {\bf
    D}(\C{U})\otimes {\bf D}(\C{V}) \to {\bf D}(\C{W})$ and we obtain
  natural transformations of double functors
  \[ \Ext_\C{U}^p(\ \cdot\ ,U) \otimes \Ext^q_\C{V}(\ \cdot\ ,V) \to
  \Ext^{p+q}_\C{W}(F(\ \cdot\ ,\ \cdot\ ),F(U,V)) \] for any fixed
  pair of objects $U\in Ob(\C{U})$ and $V\in Ob(\C{V})$, compatible
  with the triangulated structures in both variables for any $p$ and
  $q$.
\end{theorem}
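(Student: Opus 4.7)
The plan is to extend \textbf{K}(F) down to the derived category level and then produce the pairing by composing derived morphisms, exploiting the triangulated shift.

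First I would verify that $\mathbf{K}(F)$ descends along localization, i.e.\ that it sends quasi-isomorphisms to quasi-isomorphisms. Using the factorization
\[ f_*\otimes g_* = (f_*\otimes t(g_*))\circ (s(f_*)\otimes g_*) \]
for morphisms in $\mathbf{K}(\C{U})\otimes\mathbf{K}(\C{V})$, it suffices to treat each variable separately. Concretely I would prove: if $f_*\colon U_*\to U'_*$ is a quasi-isomorphism in $\mathbf{K}(\C{U})$, then for every $V_*\in\mathrm{Ob}(\Ch(\C{V}))$ the chain map $Tot^\Pi F(f_*,V_*)$ is a quasi-isomorphism in $\mathbf{K}(\C{W})$. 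Passing to mapping cones (which exist because $F$ is $k$-linear so $F(C(f_*),V_*)\cong C(F(f_*,V_*))$), this reduces to the assertion that $Tot^\Pi F(A_*,V_*)$ is acyclic whenever $A_*\in\Ch(\C{U})$ is acyclic. By the exactness hypothesis on $F_{V_q}\colon\C{U}\otimes V_q\to\C{W}$ each row $F(A_*,V_q)$ is acyclic, and then the hypothesis that countable products in $\C{W}$ exist and are exact lets us run the column-filtration spectral sequence (or equivalently compute the homology of the product total complex degree by degree) to conclude acyclicity of $Tot^\Pi F(A_*,V_*)$.

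Having descended $\mathbf{K}(F)$ to a functor $\mathbf{D}(F)\colon \mathbf{D}(\C{U})\otimes\mathbf{D}(\C{V})\to\mathbf{D}(\C{W})$, the natural transformation is then built by composition. Given $X\in\mathrm{Ob}(\C{U})$, $Y\in\mathrm{Ob}(\C{V})$, and fixed $U,V$, the identifications
\[ \Ext^p_\C{U}(X,U) = \Hom_{\mathbf{D}(\C{U})}(X,U[p]),\qquad \Ext^q_\C{V}(Y,V) = \Hom_{\mathbf{D}(\C{V})}(Y,V[q]) \]
turn a pair $(\alpha,\beta)$ of classes into a pure tensor morphism $\alpha\otimes\beta\colon (X,Y)\to(U[p],V[q])$ in $\mathbf{D}(\C{U})\otimes\mathbf{D}(\C{V})$. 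Applying $\mathbf{D}(F)$ yields a morphism $F(X,Y)\to Tot^\Pi F(U[p],V[q])$ in $\mathbf{D}(\C{W})$, and the standard shift isomorphism $Tot^\Pi F(U[p],V[q])\simeq F(U,V)[p+q]$ (which uses only that $F$ is $k$-linear and that $U[p]$ is concentrated in a single degree) identifies the target as $F(U,V)[p+q]$. The composite is the desired class in $\Ext^{p+q}_\C{W}(F(X,Y),F(U,V))$.

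Naturality in the first two slots, and compatibility with the triangulated structure in each variable, are formal consequences of the construction: precomposing with a morphism $X'\to X$ in $\mathbf{D}(\C{U})$ or $Y'\to Y$ in $\mathbf{D}(\C{V})$ simply reshuffles the composition in $\mathbf{D}(\C{W})$, and distinguished triangles are preserved by $\mathbf{D}(F)_U:=\mathbf{D}(F)(\,\cdot\,,U)$ and $\mathbf{D}(F)_V:=\mathbf{D}(F)(V,\,\cdot\,)$ because these are exact triangulated functors induced by the exactness of $F$ in each variable.

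The main obstacle is the descent step: the acyclicity of $Tot^\Pi F(A_*,V_*)$ for $A_*$ acyclic and $V_*$ arbitrary (hence unbounded in general). This is precisely where the assumption that countable products in $\C{W}$ are exact is essential, and it is the one place in the argument where the limit behavior of $Tot^\Pi$ must be controlled carefully rather than handled by formal categorical manipulations.
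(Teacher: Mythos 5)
Your proposal is correct and follows essentially the same route as the paper: reduce to one variable at a time, pass to the mapping cone, use exactness of $F$ in that variable to make each row of the resulting bicomplex acyclic, and invoke exactness of countable products together with a spectral-sequence (column-filtration) argument to get acyclicity of $Tot^\Pi$; the Ext pairing then comes from the shift identification $Tot^\Pi F(U[p],V[q])\cong F(U,V)[p+q]$. The only cosmetic difference is that the paper runs the long exact homology sequence coming from the split short exact sequence $0\to U'_*\to \mathrm{Cone}(f_*)\to U_*[-1]\to 0$, whereas you commute $F$ past the cone directly; these are interchangeable.
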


\begin{proof}
  We will show that quasi-isomorphisms in each variable, with the
  other variable fixed, are sent to quasi-isomorphisms in $\Ch(\C{W})$
  in order to get the required extension.  We will prove this for the
  first variable.  The proof for the second variable is identical.
  Assume $f_*\colon U_*\to U'_*$ is a quasi-isomorphism in
  $\Ch(\C{U})$.  By \cite[Corollary 1.5.4]{Weibel:HomologicalAlgebra}
  $f_*$ is a quasi-isomorphism if and only if the mapping cone of
  $f_*$, here denoted by ${\rm Cone(f_*)}$, is acyclic and we have a
  split exact sequence of differential graded objects
  \[ 0 \to U'_* \to {\rm Cone}(f_*) \to U_*[-1] \to 0 \] Since
  $Tot^\Pi_*$ is an exact functor from $\Ch^2(\C{W})$ the category of
  bi-differential bi-graded objects in $\C{W}$ to $\Ch(\C{W})$, and
  $F$ is exact in the first variable, this short exact sequence
  translates to
  \[ 0 \to Tot^\Pi_* F(U'_*,V_*) \to Tot^\Pi_* F({\rm Cone}(f_*),V_*)
  \to Tot^\Pi_* F(U_*[-1],V_*) \to 0 \] Again, since $F$ is exact,
  $F({\rm Cone}(f_*),V_q)$ is acyclic for every $q\in\B{Z}$.  Then by
  a spectral sequence argument we conclude that the total complex
  $Tot^\Pi_* F({\rm Cone}(f_*),V_*)$ is acyclic.
  Therefore $Tot^\Pi_* F(f_*,V_*)$ induces an isomorphism on homology
  for any $V_*$ in $\Ch(\C{V})$ using the resulting long exact
  sequence in homology.  This finishes the proof of the first part.
  The subcategories that we are interested in the derived categories
  ${\bf D}(\C{U})$, ${\bf D}(\C{V})$ and ${\bf D}(\C{W})$ are the full
  subcategories of the complexes whose homologies are concentrated at
  only one degree.  Since $F$ is exact, ${\bf D}(F)$ sends the objects
  of the form $U[i]$ in ${\bf D}(\C{U})$ and $V[j]$ in ${\bf
    D}(\C{V})$ to an object of the form $F(U,V)[i+j]$ in ${\bf
    D}(\C{W})$.  Here $U$, $V$ and $F(U,V)$ are ordinary objects in
  $\C{U}$, $\C{V}$ and $\C{W}$ respectively, and we use the notation
  that for an ordinary object $A$ in an abelian category $\C{A}$ the
  symbol $A[n]$ represents an object in the derived category of
  $\C{A}$ which is the same $A$ considered as a complex with 0
  differentials and concentrated at degree $n\in\B{Z}$.  The result
  follows.
\end{proof}

\begin{remark}
  The natural transformation of double functors we obtained in
  Theorem~\ref{NaturalTransformation} is actually a natural
  transformation of quadruple functors if we do not fix two of the
  variables. But we will need the result in this form.
\end{remark}

\begin{lemma}\label{Agreement}
  Assume $G^*\colon \C{U} \otimes \C{V}\to \lmod{k}$ is a graded
  double functor which is a cohomological $\delta$-functor
  \cite[Def. 2.1.1]{Weibel:HomologicalAlgebra} in each variable. Fix
  $U\in Ob(\C{U})$ and $V\in Ob(\C{V})$.  Assume also that we have
  natural transformations of double functors of the form
  \[ \eta^{p,q}\colon 
     \Ext^p_{\C{U}}(\ \cdot\ ,U)\otimes\Ext^q_{\C{V}}(\ \cdot\ ,V) 
     \to G^{p+q}(\ \cdot\ ,\ \cdot\ )
  \]
  compatible with the $\delta$-structures in both of the variables for
  any $p,q\geq 0$.  If there exists an $n\in\B{N}$ such that
  $\eta^{p,q}=0$ for any $p+q=n$ then $\eta^{p,q}=0$ for any $p+q\geq
  n$.
\end{lemma}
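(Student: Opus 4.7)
The plan is a standard dimension-shifting argument, performed by induction on the total degree $N = p+q$ with $N \geq n$. The hypothesis $\eta^{p,q} = 0$ for all $p+q = n$ is the base case, and I would establish the inductive step: if $\eta^{p',q'}=0$ for all $p'+q'=N$, then $\eta^{p,q}=0$ for all $p+q=N+1$.

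Fix such $p, q$ with $p+q = N+1 \geq n+1 \geq 1$; then at least one of $p, q$ is positive, and by the symmetric roles of $\C{U}$ and $\C{V}$ I may assume $p \geq 1$. Fix objects $X \in Ob(\C{U})$ and $Y \in Ob(\C{V})$, and, using that $\C{U}$ has enough projectives, choose a short exact sequence $0 \to X' \to P \to X \to 0$ with $P$ projective. Since $\Ext^i_{\C{U}}(P,U) = 0$ for every $i \geq 1$, inspection of the resulting long exact sequence shows that the connecting homomorphism
\[ \delta\colon \Ext^{p-1}_{\C{U}}(X', U) \to \Ext^p_{\C{U}}(X, U) \]
is surjective. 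The compatibility of $\eta$ with the $\delta$-structures, applied in the first variable to this short exact sequence, then yields (up to a universal sign) the identity
\[ \eta^{p,q}_{X,Y} \circ (\delta \otimes \mathrm{id}) = \delta_G \circ \eta^{p-1,q}_{X',Y}, \]
where $\delta_G\colon G^{p+q-1}(X',Y) \to G^{p+q}(X,Y)$ is the connecting map of the cohomological $\delta$-functor $G^*(\ \cdot\ , Y)$. By the inductive hypothesis $\eta^{p-1,q}_{X',Y} = 0$, since $(p-1)+q = N$, so the right-hand side vanishes. Because $k$ is a field, tensoring with $\Ext^q_{\C{V}}(Y,V)$ preserves surjections, so $\delta \otimes \mathrm{id}$ is surjective, and therefore $\eta^{p,q}_{X,Y} = 0$. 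As $X$ and $Y$ are arbitrary, $\eta^{p,q} = 0$ as a natural transformation. The symmetric case $q \geq 1$ is handled identically, using a projective presentation of $Y$ in $\C{V}$ and the $\delta$-structure in the second variable.

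The only real point of care is parsing the hypothesis ``compatible with the $\delta$-structures in both variables'' into the commutation identity displayed above; once that is in hand the rest of the argument is routine bookkeeping. The condition $p+q \geq 1$ at each inductive step is precisely what guarantees that $\Ext^p_{\C{U}}(X,U)$ (or $\Ext^q_{\C{V}}(Y,V)$) is hit surjectively by $\delta$ from a lower-degree Ext group, which is why the inductive step succeeds for every $N \geq n$ without any further hypothesis on the value of $n$.
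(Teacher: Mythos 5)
Your proof is correct and is essentially the paper's own argument: both use dimension shifting via a projective presentation in one variable, the surjectivity of the connecting map $\delta$ onto $\Ext^{p}$ (because $\Ext^{\geq 1}$ of a projective vanishes), the fact that tensoring over the field $k$ preserves epimorphisms, and compatibility with the $\delta$-structures to push vanishing up one total degree at a time. The only cosmetic difference is that you index the induction from the target degree downward while the paper pushes upward from the known-zero degree.
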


\begin{proof}
  Fix another object $V'\in Ob(\C{V})$ and consider a short exact
  sequence $0\xla{} U''\xla{}P\xla{} U'\xla{}0$ in $\C{U}$ where $P$
  is projective.  We obtain a commutative diagram of the form\small
  \[\SelectTips{eu}{12}\xymatrix{
    \Ext_{\C{U}}^p(U',U)\otimes\Ext_{\C{V}}^q(V',V)
    \ar[d]_{\eta^{p,q}}^0 \ar[r]^-{\delta^p\otimes id\ } &
    \Ext_{\C{U}}^{p+1}(U'',U)\otimes\Ext_{\C{V}}^q(V',V)
    \ar[d]_{\eta^{p+1,q}} \ar[r]^0 &
    \Ext_{\C{U}}^{p+1}(P,U)\otimes\Ext_{\C{V}}^q(V',V)
    \ar[d]^{\eta^{p+1,q}} \\
    G^{p+q}(U',V') \ar[r] & G^{p+q+1}(U'',V') \ar[r] & G^{p+q+1}(P,V')
  }\] \normalsize Since $P$ is projective $\Ext_{\C{U}}^{p+1}(P,U) =
  0$ and $\delta^p\otimes id$ is an epimorphism for any $p\geq 0$.  On
  the other hand $\eta^{p+1,q}(\delta^p\otimes id) = 0$ since
  $\eta^{p,q}=0$.  Because $\delta^p\otimes id$ is an epimorphism, we
  have $\eta^{p+1,q}=0$.  One can repeat the same argument for the
  other variable.  Since $U''$ was arbitrary and short exact sequences
  of the form $0\xla{} U''\xla{}P\xla{} U'\xla{}0$ always exist
  because $\C{U}$ (and $\C{V}$ for the second variable) have enough
  projectives, the result follows.
\end{proof}

\section{Diagram Modules and Pairings}\label{DiagramModules}

\begin{definition}
  Let ${\bf C}$ be a small category and let $\lmod{{\bf C}}$ (resp.
  $\rmod{{\bf C}}$) denote the category of covariant (resp.
  contravariant) functors from ${\bf C}$ to $\lmod{k}$ and their
  natural transformations.  We will call such functors left
  (resp. right) ${\bf C}$-modules.  Such a module $X_\bullet$ is a
  direct sum of $k$-modules indexed by the set of objects of ${\bf C}$
  of the form $\bigoplus_{a\in Ob({\bf C})} X_a$.  The primary
  examples we have in mind are ${\bf C}=\Delta$ the simplicial
  category, or ${\bf C} =\Lambda$ Connes' cyclic category.  We will
  use the following notation
  \[ (a\xla{f}b)\tr x := X_f(x) \qquad \text{(resp. } 
      x\tl (a\xla{f}b) := X_f(x) \text{)} \] for any $x\in X_a$
  (resp. $x\in X_b$) where $X_f\colon X(b)\to X(a)$ is the evaluation
  of $X_\bullet$ on the morphism $a\xla{f}b$ in ${\bf C}$.  
\end{definition}

\begin{remark}
  In order to simplify notation, for a small category ${\bf C}$ we
  will use the notation $\Hom_{\bf C}$ for morphisms of ${\bf
    C}$-modules.  We will extend this simplification to the derived
  functors as well and use $\Ext_{\bf C}^*$ for the derived functors
  of the double functor $\Hom_{\bf C}$ for both left and right ${\bf
    C}$-modules.
\end{remark}

\begin{definition}
  We define the ${\bf C}$-module $k_\bullet$ by letting $k_a = k$ for
  any $a\in Ob({\bf C})$ and we let
  \[ 1_a\tl(a\xla{f}b) = 1_b \quad\text{ or }\quad
     (a\xla{f}b)\tr 1_b = 1_a 
  \]
  for any $f\colon b\to a$ in ${\bf C}$ depending on whether we view
  it as a left or right ${\bf C}$-module.
\end{definition}

\begin{definition}
  Assume $X_\bullet$ is a left ${\bf C}$-module and $Y_\bullet$ is a
  right ${\bf C}$-module.  Let
  \[ diag_\bullet \Hom_k(X_\bullet,Y_\bullet) 
    := \bigoplus_{a\in Ob({\bf C})} \Hom_k(X_a,Y_a)
  \] 
  By definition $diag_\bullet \Hom_k(X_\bullet,Y_\bullet)$ is a
  $k$-module indexed by the set of objects of ${\bf C}$.  Also, given
  any $\psi\in diag_a\Hom_k(X_\bullet,Y_\bullet)$ and $f\in\Hom_{\bf
    C}(b,a)$ define
  \[ (\psi\cdot(a\xla{f}b))(x) := \psi((a\xla{f}b)\tr x)\tl(a\xla{f}b) 
     \in diag_b\Hom_k(X_\bullet,Y_\bullet)
  \]
  for any $x\in X_b$.  So, $\psi\cdot(a\xla{f}b)$ is in 
  $diag_b\Hom_k(X_\bullet,Y_\bullet)$
\end{definition}

\begin{proposition}\label{DiagonalHom}
  The assignment $diag_\bullet\Hom_k(\ \cdot\ ,\ \cdot\ )$ defines
  an exact double functor of the form
  \[ diag_\bullet\Hom_k(\ \cdot\ ,\ \cdot\ )\colon
    (\rmod{{\bf C}})^{op}\otimes \lmod{{\bf C}}\to \lmod{{\bf C}} \]
  which induces a natural transformation of double functors
  \[ (\ \cdot\ \smile \cdot\ )\colon
     \Ext_{\bf C}^p(k_\bullet,\ \cdot\ )\otimes
     \Ext_{\bf C}^q(\ \cdot\ , k_\bullet)
     \to \Ext_{\bf C}^{p+q}(diag_\bullet\Hom_k(\ \cdot\ ,\ \cdot\ ), k_\bullet)
  \]
  for any $p,q\geq 0$.
\end{proposition}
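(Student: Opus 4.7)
The plan is to verify the hypotheses of Theorem~\ref{NaturalTransformation} for the double functor $F := diag_\bullet\Hom_k(\ \cdot\ ,\ \cdot\ )$ and then to specialize the conclusion of that theorem to the fixed objects $U = V = k_\bullet$, producing the advertised pairing after the usual opposite-category identifications.

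First I would check that the prescribed action really defines a ${\bf C}$-module structure on $diag_\bullet\Hom_k(X_\bullet,Y_\bullet)$. The two axioms $(\psi\cdot f)\cdot g = \psi\cdot(f\circ g)$ and $\psi\cdot id_a = \psi$ drop out of the module axioms of $X_\bullet$ and $Y_\bullet$ once one unwinds the definitions of $\tr$ and $\tl$. Functoriality in each variable is equally direct: for morphisms of ${\bf C}$-modules $\alpha\colon X_\bullet'\to X_\bullet$ and $\beta\colon Y_\bullet\to Y_\bullet'$ the levelwise assignment $\psi\mapsto \beta_a\circ\psi\circ\alpha_a$ defines a morphism of ${\bf C}$-modules, compatibility with the diagonal action following from the naturality of $\alpha$ and $\beta$.

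The crux is exactness of $F$ in each variable. Since a sequence of ${\bf C}$-modules is exact exactly when it is levelwise exact, and since $k$ is a field so that both $\Hom_k(V,\ \cdot\ )$ and $\Hom_k(\ \cdot\ ,W)$ are exact for arbitrary $k$-modules $V$ and $W$, the restrictions $F_X$ and $F_Y$ are exact. The target $\lmod{\bf C}$ admits countable pointwise products, and such products are exact because countable products of $k$-vector spaces are exact; so the standing hypothesis of Theorem~\ref{NaturalTransformation} is satisfied and we get the induced derived double functor $\textbf{D}(F)$.

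Finally, I would compute $F(k_\bullet,k_\bullet)\cong k_\bullet$: at each level $a$ we have $\Hom_k(k,k)\cong k$, and for any $f\colon b\to a$ the calculation
\[ (id_k\cdot f)(1_b) = id_k((a\xla{f}b)\tr 1_b)\tl(a\xla{f}b) = 1_a\tl(a\xla{f}b) = 1_b \]
shows that the diagonal action on the generator $id_k$ agrees with the trivial action on $k_\bullet$. Applying Theorem~\ref{NaturalTransformation} with $U = V = k_\bullet$, and using the identification $\Ext_{(\rmod{\bf C})^{op}}^p(X_\bullet,k_\bullet) = \Ext_{\rmod{\bf C}}^p(k_\bullet,X_\bullet)$ coming from reversing arrows in the opposite category, yields the natural transformation $\smile$ of double functors as claimed. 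I do not expect any real obstacle beyond careful bookkeeping of the variances and handednesses between $\lmod{\bf C}$, $\rmod{\bf C}$, and their opposites.
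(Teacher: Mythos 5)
Your proposal is correct and follows essentially the same route as the paper: verify the ${\bf C}$-module structure on $diag_\bullet\Hom_k(X_\bullet,Y_\bullet)$, deduce exactness in each variable from $k$ being a field, observe $diag_\bullet\Hom_k(k_\bullet,k_\bullet)\cong k_\bullet$, and invoke Theorem~\ref{NaturalTransformation} with $U=V=k_\bullet$. The extra details you supply (the identity axiom, functoriality, exactness of countable products in $\lmod{{\bf C}}$, and the explicit opposite-category bookkeeping) are all consistent with, and slightly more careful than, the paper's own argument.
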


\begin{proof}
  First, we must show that the action of ${\bf C}$ on $diag_\bullet
  \Hom_k(X_\bullet,Y_\bullet)$ is associative for any left ${\bf
    C}$-module $X_\bullet$ and right ${\bf C}$-module $Y_\bullet$.  We
  observe
  \begin{align*}
  ((\psi\cdot(a\xla{f}b))\cdot(b\xla{g}c))(x)
     = & (\psi\cdot(a\xla{f}b))((b\xla{g}c)\tr x)\tl(b\xla{g}c)\\
     = & (\psi((a\xla{f}b)\tr((b\xla{g}c)\tr x))\tl(a\xla{f}b))\tl(b\xla{g}c)\\
     = & \psi((a\xla{fg}c)\tr x)\tl(a\xla{fg}c)\\
     = &  (\psi\cdot(a\xla{fg}c))(x)
  \end{align*} 
  for any $x\in diag_c\Hom_k(X_\bullet,Y_\bullet)$ and $f\colon b\to
  a$ and $g\colon c\to b$ in ${\bf C}$.  The exactness of the double
  functor follows form the fact that $k$ is a field.  Now, observe
  that $diag_\bullet\Hom_k(k_\bullet,k_\bullet) \cong k_\bullet$ apply
  Theorem~\ref{NaturalTransformation} to get the prescribed natural
  transformation.
\end{proof}

\begin{remark}
  One has to be careful in interpreting the derivatives of the double
  functor and the pairing we obtained in Proposition~\ref{DiagonalHom}
  in cases where one would like to use bounded above or bounded below
  derived categories.  In such, cases we have
  \[ {\bf D}_+(diag_\bullet\Hom_k)\colon {\bf D}_-(\lmod{{\bf C}})
            \otimes {\bf D}_+(\rmod{{\bf C}})
     \to {\bf D}_+(\rmod{{\bf C}})
  \]
  or 
  \[ {\bf D}_-(diag_\bullet\Hom_k)\colon {\bf D}_+(\lmod{{\bf C}})
            \otimes {\bf D}_-(\rmod{{\bf C}})
     \to {\bf D}_-(\rmod{{\bf C}})
  \]
 because our functor $diag_\bullet\Hom_k$ is contravariant in
 the first variable.
\end{remark}

\begin{remark}
  For the curious reader who would like to see an explicit formula for
  the pairing we defined above, we note that the pairing is a slight
  modification of the external product in cohomology.  So, the
  Alexander-Whitney map~(cf. \cite[Thm. 8.5]{MacLane:Homology} or
  \cite[8.5.4]{Weibel:HomologicalAlgebra}), applied correctly is going
  to work.  We ask the reader to pick his/her favorite cosimplicial
  module $\C{X}_{\bullet,\bullet}$ which consists of injective ${\bf
    C}$-modules whose (singular) homology is the ${\bf C}$-module
  $X_\bullet$ concentrated at degree 0, and simplicial module
  $\C{Y}_{\bullet,\bullet}$ which consists of projective ${\bf
    C}$-modules whose (singular) homology is the ${\bf C}$-module
  $Y_\bullet$ concentrated at degree 0.  Such modules exist because of
  Dold-Kan equivalence~\cite{DoldPuppe:Correspondence}.  Then for two
  given cochains $\xi\colon k_\bullet\to \C{X}_{p.\bullet}$ and
  $\nu\colon \C{Y}_{q,\bullet}\to k_\bullet$ we define a new cochain
  $\xi\smile\nu\colon
  diag_\bullet\Hom_k(\C{X}_{p+q,\bullet},\C{Y}_{p+q,\bullet})\to
  k_\bullet$ by
  \[ (\xi\smile \nu)(\eta) := \nu\circ
  \underbrace{\partial^\C{Y}_{q+2}\circ\cdots\partial^\C{Y}_{p+q+1}}_{p\text{-terms}}
      \circ\eta\circ
      \underbrace{\partial^\C{X}_{p+q}\circ\cdots\circ\partial^\C{X}_{p+1}}_{q\text{-terms}}
      \circ\xi
  \]
  for any $\eta\in
  diag_\bullet\Hom_k(\C{X}_{p+q,\bullet},\C{Y}_{p+q,\bullet})$ where
  we use $\partial^\C{Z}_i$ to denote the (co)face maps of a
  (co)simplicial object $\C{Z}_\bullet$.
\end{remark}

\section{Cyclic (Co)Homology}\label{CyclicCohomology}

\begin{definition}
  Let $\Lambda$ also denote the $k$-algebra generated by the arrows of
  Connes' cyclic category $\Lambda$.  Here we will give a specific
  presentation of the $k$-algebra $\Lambda$.  We will denote the
  generators by $\partial^n_j\colon [n]\to [n+1]$, $\sigma^n_i\colon
  [n+1]\to [n]$ and $\tau_n^\ell\colon [n]\to [n]$ for any $[n]\in
  Ob(\Lambda)$ with $0\leq j\leq n+1$, $0\leq i\leq n$ and $0\leq
  \ell\leq n$.  The relations are
  \begin{align*}
    \partial^{n+1}_i\partial^n_j & = \partial^{n+1}_{j+1}\partial^n_i \text{ for } i\leq j 
    \qquad\qquad
    \sigma^n_j\sigma^{n+1}_i  = \sigma^n_i\sigma^{n+1}_{j+1} \text{ for } i\leq j\\
    \partial^n_i\sigma^n_j & = \sigma^{n+1}_{j+1}\partial^{n+1}_i \text{ for } i\leq j 
    \qquad\qquad
    \partial^n_i\sigma^n_j  = \sigma^{n+1}_j\partial^{n+1}_{i+1} \text{ for } i>j\\
    \sigma^n_i\partial^n_i & = id_i = \sigma^n_i\partial^n_{i+1} 
    \qquad\qquad\qquad\quad  \tau_n^\ell\tau_n = \tau_n^{\ell+1} \text{ and } \tau_n^{n+1} = id_n \\
    \partial^n_j\tau_n^i & = \tau_{n+1}^{i+p}\partial^n_q \text{ for }  i + j = (n+1)p + q \qquad
    \sigma^n_j\tau_{n+1}^i = \tau_n^{i-p}\sigma^n_q \text{ for } i+j = (n+1)p + q
  \end{align*}
  All other products between the generators are $0$.  Note that
  $\Lambda$ is not a unital $k$-algebra, but a $H$-unital
  algebra~\cite{Wodzicki:Excision}.  One also can view $\Lambda$ first
  as a bimodule then as an algebra over its subalgebra $\C{K}:=
  \bigoplus_{n\geq 0} k\{id_n\}$ generated by $id_n$ for any $n\geq
  0$.
\end{definition}

\begin{remark}\label{Lift}
  Let $\B{F}$ be the set of all finite subsets of $\B{N}$ the set of
  all natural numbers.  Note that $\Lambda$ is the colimit of unital
  $k$-algebras $\colim_{U\in\B{F}}\Lambda_U$ where for an arbitrary
  $U\in\B{F}$ the $k$-algebra $\Lambda_U$ is the unital subalgebra of
  $\Lambda$ generated by elements $\Psi$ which satisfy the property
  that $\Psi = id_m \Psi id_n$ for some $m,n\in U$.  Thus $\Lambda$ is
  $H$-unital.  Because of this property, we are interested only in
  {\em locally finite and faithful modules} over $\Lambda$.  These are
  graded $k$-modules $M_* = \bigoplus_{n\geq 0} M_n$ such that $id_n
  m_n = m_n$ for any $m_n\in M_n$.  These modules have the property
  that they can be written as a colimit of unital modules over the
  unital algebras $\Lambda_U$ for $U\in\B{F}$.  Thus one can prove
  statements for the unital algebras $\Lambda_U$ for $U\in\B{F}$ and
  their modules $M_U = {\rm Res}^{\Lambda}_{\Lambda_U} M$ then lift
  the argument to $\Lambda$ and $M$ by using a colimit taken over
  $U\in\B{F}$.
\end{remark}

\begin{definition}
  Let $\C{X} = \bigoplus_{n,m\in\B{N}} X_{n,m}$ be a $\C{K}$-bimodule.
  Assume $\C{A}$ is a $\C{K}$-algebra.  An isomorphism of
  $\C{K}$-bimodules $\omega_\C{X}\colon\C{X}\otimes_\C{K}\C{A} \to
  \C{A}\otimes_\C{K}\C{X}$ is called a transposition if (i) one has a
  commutative diagram of the form
  \begin{equation*}
  \SelectTips{eu}{12}\xymatrix{
  \C{A}\otimes_\C{K}\C{A}\otimes_\C{K}\C{X}
  \ar[d]_{\mu\otimes\C{X}}
  \ar[r]^{\C{A}\otimes\omega_\C{X}} & 
  \C{A}\otimes_\C{K}\C{X}\otimes_\C{K}\C{A}
  \ar[r]^{\omega_\C{X}\otimes\C{A}} &
  \C{X}\otimes_\C{K}\C{A}\otimes_\C{K}\C{A}
  \ar[d]^{\C{X}\otimes\mu}\\
  \C{A}\otimes_\C{K}\C{X} \ar[rr]_{\omega_\C{X}} & & \C{X}\otimes_\C{K}\C{A}
  }
  \end{equation*}
  where $\mu\colon\C{A}\otimes_\C{K}\C{A}\to\C{A}$ is the
  multiplication structure on $\C{A}$, and (ii) we have
  $\omega(1_p\otimes x) = (x\otimes 1_q)$ for any $x\in X_{p,q}$ and
  $p,q\in\B{N}$. 
\end{definition}
  
\begin{remark}\label{Standard}
  Consider $\Delta$ the subalgebra of $\Lambda$ generated by elements
  $\partial^n_j$, $\sigma^n_i$ and $id_n$ for any $n\in\B{N}$ and all
  possible $i,j$; and the subalgebra ${\bf T}$ of $\Lambda$ generated
  by $\tau_n^i$ for all possible $n\geq 0$ and $0\leq i\leq n$.
  Define a transposition $\omega\colon \Delta\otimes_\C{K}{\bf T}\to
  {\bf T}\otimes_\C{K} \Delta$ using the relations in $\Lambda$
  \begin{align*}
    \omega(\partial^n_j\otimes\tau_n^i) 
     = & \tau_{n+1}^{i+p}\otimes \partial^n_q\text{ with } (i+j) = (n+1)p + q \text{ and } 0\leq q\leq n\\
    \omega(\sigma^n_s\otimes\tau_{n+1}^s) 
     = & \tau_n^{s-a}\otimes\sigma^n_b \text{ with } (s+t) = (n+2)a + b \text{ and } 0\leq b \leq n+1
  \end{align*}
  It is easy to see that $\omega$ is invertible and the inverse is
  given by
  \begin{align*}
    \omega^{-1}(\tau_{n+1}^i\otimes\partial^n_j)
     =  & \partial^n_q\otimes\tau_n^{i-p} \text{ with } (-i+j) = (n+2)(-p) + q \text{ and } 0\leq q\leq n+1\\
    \omega^{-1}(\tau_n^s\otimes \sigma^n_t)
     =  & \sigma^n_b\otimes \tau_{n+1}^{s-a} \text{ with } (-s+t) = (n+1)(-a) + b \text{ and } 0\leq b\leq n
  \end{align*}
  It is tedious but easy to show that $\omega$ and $\omega^{-1}$ are
  transpositions.
\end{remark}

\begin{remark}
  We can even split the subalgebra $\Delta$ of $\Lambda$ into 2 pieces
  using an appropriate distributivity law as follows.  Let $\C{F}$ be
  the subalgebra of $\Lambda$ generated by $\partial^n_i$ and $id_n$ for
  all $n\geq 0$ and $0\leq i\leq n+1$.  Let $\C{D}$ be the subalgebra
  of $\Lambda$ generated by all $\sigma^n_i$ and $id_n$ for all $n\geq 0$
  and $0\leq i\leq n$.  There is a distributivity law of the form
  $\zeta\colon \C{F}\otimes_\C{K}\C{D}\to \C{D}\otimes_\C{K}\C{F}$
  coming from the relations in $\Lambda$.  Note that this distributivity
  law is not an isomorphism because of the relation
  $\sigma^n_i\partial^n_i = id_n = \sigma^n_i\partial^n_{i+1}$ for all
  $0\leq i\leq n$ and $n\geq 0$.
\end{remark}

\begin{definition}
  Let $\CH_n$ be the left ideal of $\Delta$ generated by $id_n$.
  Define $d^\CH_n\colon \CH_{n+1} \to \CH_n$ by using the elements
  \[ d^\CH_n = \sum_{j=0}^{n+1} (-1)^j \partial^n_j \] via right
  multiplication for any $n\geq 0$.  One can easily see that
  $d^\CH_{n+1}d^\CH_n = 0$ for any $n\geq 0$.
\end{definition}

\begin{lemma}
  The differential graded $\Delta$-module $(\CH_*,d^\CH_*)$ is a
  $\Delta$-projective resolution of the trivial left $\Delta$-module
  $k_\bullet$.
\end{lemma}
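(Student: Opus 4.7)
The plan is to verify the two conditions of being a projective resolution: (a) each $\CH_n$ is projective in $\lmod\Delta$, and (b) the augmented complex $\cdots\to\CH_1\to\CH_0\to k_\bullet\to 0$ is exact. Both come out to be essentially classical statements once one identifies the complex correctly.

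For (a), the key point is a Yoneda-style identification. Unwinding the definition, $\CH_n=\Delta\cdot id_n$ is, as a covariant functor $\Delta\to\lmod k$, exactly the representable functor $\Hom_\Delta([n],-)$: in grading $m$ it is the free $k$-module on the set $\Hom_\Delta([n],[m])$, and a morphism $g\colon[m]\to[m']$ acts by post-composition. The co-Yoneda computation
\[
\Hom_{\lmod\Delta}\!\bigl(\CH_n,M\bigr)\ \cong\ M_n
\]
is natural and exact in $M$, so $\CH_n$ is projective. (The $H$-unitality of $\Delta$ from Remark~\ref{Lift} causes no trouble here since everything is phrased functorially; if desired one may first prove the statement inside each unital subalgebra $\Delta_U$ and take a colimit.)

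For (b), I first define the augmentation $\varepsilon\colon\CH_0\to k_\bullet$, which in grading $m$ sends every morphism $[0]\to[m]$ to $1\in k$ and extends $k$-linearly. This is $\Delta$-equivariant because the action on $k_\bullet$ is trivial, and $\varepsilon\circ d^\CH_0=0$ since $\varepsilon(f\circ\partial^0_0)-\varepsilon(f\circ\partial^0_1)=1-1=0$. Exactness of the augmented complex is checked grade-by-grade. In grading $m$, the complex $((\CH_*)_m,d^\CH_*)$ is nothing but the unnormalised simplicial chain complex of the standard $m$-simplex $\Delta^m$: $(\CH_n)_m=k[\Hom_\Delta([n],[m])]$ is the free $k$-module on the $n$-simplices of $\Delta^m$, and right multiplication by $\sum_j(-1)^j\partial^n_j$ is the usual alternating face differential. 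The augmented chain complex of $\Delta^m$ is contractible by an explicit extra-degeneracy homotopy, which I would record here as
\[
h\colon(\CH_n)_m\to(\CH_{n+1})_m,\qquad h(f)(0)=0,\quad h(f)(i)=f(i-1)\text{ for }i\ge 1,
\]
together with $\eta\colon k\to(\CH_0)_m$ sending $1$ to the constant map at the vertex $0\in[m]$. Using the two identities $h(f)\circ\partial^n_0=f$ and $h(f)\circ\partial^n_j=h(f\circ\partial^{n-1}_{j-1})$ for $j\ge 1$, one checks $d^\CH h+h d^\CH=\mathrm{id}-\eta\circ\varepsilon$.

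There is no real obstacle: the ``main step'' is merely the bookkeeping that verifies $dh+hd=\mathrm{id}-\eta\varepsilon$, and this is the standard simplicial contraction of the representable $\Hom_\Delta(-,[m])$. One could equivalently invoke the known contractibility of $\Delta^m$ as a simplicial set to avoid writing the homotopy out.
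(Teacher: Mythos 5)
Your proof is correct and follows essentially the same route as the paper's: identify $\CH_n$ with the representable (hence projective) module freely generated by $\Hom_\Delta([n],-)$, and observe that in each grading $m$ the augmented complex is the simplicial chain complex of the standard $m$-simplex, which is contractible. The only difference is that you write out the extra-degeneracy homotopy explicitly where the paper defers to Connes' Lemma~2; the verification of $dh+hd=\mathrm{id}-\eta\varepsilon$ you sketch is accurate.
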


\begin{proof}
  The proof we present here is the same as that of
  \cite[Lem. 2]{Connes:ExtFunctors}.  Notice that the left ideal
  $\left<id_n\right|$ of $\Delta$ generated by $id_n$ is the free
  $k$-module on the set $\bigsqcup_m \Hom_\Delta(n,m)$ where, by abuse
  of notation, $\Delta$ denotes the category of finite ordinals and
  their order preserving maps.  We observe that the arrows of this
  category generates our algebra $\Delta$ over $k$ and the action of
  the algebra $\Delta$ is defined by pre-compositions on this set of
  generators.  In other words, for each fixed $m$ the differential
  graded $k$-module $id_m\CH_*$ computes the simplicial homotopy of the
  simplicial $k$-module $k[\Delta^m]:= k[\Hom_{\Delta}(\bullet,m)]$
  which is 0 everywhere except at degree $0$, and is the ground field
  at degree $0$.
\end{proof}

\begin{definition}
  Let $\C{X}$ be a $\C{K}$-bimodule and $\C{A}$ be an augmented
  $\C{K}$-algebra with augmentation $\epsilon\colon \C{A}\to \C{K}$.
  Let $\omega_\C{X}\colon \C{A}\otimes_\C{K}\C{X}\to
  \C{X}\otimes_\C{K}\C{A}$ be a transposition.  Then $\C{X}$ carries a left
  $\C{A}$-module structure $\lambda_\C{X}$ which is defined as
  $(\C{X}\otimes_\C{K}\epsilon)\omega_\C{X}$.
\end{definition}
  
\begin{definition}\label{Diagonal}
  Let $\C{X}$ and $\omega_\C{X}$ be as before.  Let $\C{Y} =
  \bigoplus_{n,m\in\B{N}} Y_{n,m}$ be another $\C{K}$-bimodule and
  assume we have another transposition $\omega_\C{Y}\colon
  \C{A}\otimes_\C{K} \C{Y}\to \C{Y}\otimes_\C{K}\C{A}$.  Then the
  product $\C{X}\otimes_\C{K}\C{Y}$ carries a left $\C{A}$-module
  structure which is denoted by $\C{X}\odot\C{Y}$.  The $\C{A}$-module
  structure on $\C{X}\odot\C{Y}$ comes from the product transposition
  $\omega_{\C{X}\odot\C{Y}}\colon
  \C{A}\otimes_\C{K}\C{X}\otimes_\C{K}\C{Y}\to\C{X}\otimes_\C{K}\C{Y}
  \otimes_\C{K}\C{A}$ and the augmentation
  $\epsilon\colon\C{A}\to\C{K}$.  The product transposition is defined
  as
  \[ \omega_{\C{X}\odot\C{Y}} := 
      (\C{X}\otimes \omega_\C{Y})\circ(\omega_\C{X}\otimes\C{Y})
  \]
  and we let the left $\C{A}$-module structure
  $\lambda_{\C{X}\odot\C{Y}}\colon
  \C{A}\otimes_\C{K}(\C{X}\odot\C{Y}) \to\C{X}\odot\C{Y}$ by
  \[ \lambda_{\C{X}\odot\C{Y}} := 
     ((\C{X}\odot\C{Y})\otimes\epsilon)\circ\omega_{\C{X}\odot\C{Y}}
  \]
\end{definition}

\begin{remark}
  Any ${\bf T}$-module $X_\bullet := \bigoplus_{n\geq 0} X_n$ admits a
  canonical transposition $\omega_X\colon {\bf T}\otimes_\C{K}
  X_\bullet\to X_\bullet\otimes_\C{K}{\bf T}$ which is defined as
  \[ \omega_X(\tau_n^\ell\otimes x) = \tau_n^\ell\cdot
  x\otimes\tau_n^\ell \] for every $n\in \B{N}$, $x\in X_n$
  and $\ell\in\B{Z}$.
\end{remark}

\begin{lemma}
  Let $X_\bullet$ and $Y_\bullet$ be two ${\bf T}$-modules.  Then one
  has an isomorphism of $k$-modules of the form 
  \[ X_\bullet\otimes_{\bf T} Y_\bullet \cong
  k_\bullet\otimes_{\bf T}(X_\bullet\odot Y_\bullet) \]
\end{lemma}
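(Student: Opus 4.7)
The plan is to identify both sides, degree by degree, with the coinvariants of the diagonal cyclic group action on $X_n \otimes Y_n$.

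First I would unpack the induced left ${\bf T}$-module structure on $X_\bullet \odot Y_\bullet$ from Definition~\ref{Diagonal}. Applying the two canonical transpositions of the preceding remark in sequence, for $x\in X_n$ and $y\in Y_n$ one computes
\[
\omega_{X\odot Y}(\tau_n^\ell\otimes x\otimes y)
 = (X\otimes\omega_Y)\bigl((\tau_n^\ell x)\otimes\tau_n^\ell\otimes y\bigr)
 = (\tau_n^\ell x)\otimes(\tau_n^\ell y)\otimes\tau_n^\ell.
\]
The augmentation $\epsilon\colon{\bf T}\to\C{K}$ sending $\tau_n^\ell\mapsto id_n$, which is precisely the augmentation making $k_\bullet$ the trivial left ${\bf T}$-module, then collapses the last tensor factor, yielding the diagonal action $\lambda_{X\odot Y}(\tau_n^\ell\otimes x\otimes y)=(\tau_n^\ell x)\otimes(\tau_n^\ell y)$.

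Next I would use that ${\bf T}$ splits as $\bigoplus_{n\geq 0}k[\B{Z}/(n+1)]$: since the action is degree preserving and the modules involved are locally finite and faithful in the sense of Remark~\ref{Lift}, the tensor products over ${\bf T}$ decompose as direct sums over $n\in\B{N}$. In degree $n$, the right hand side becomes $k\otimes_{k[\B{Z}/(n+1)]}(X_n\otimes Y_n)$ with trivial action on $k$ and diagonal action on $X_n\otimes Y_n$, which is the module of diagonal coinvariants. For the left hand side, I would view $X_\bullet$ as a right ${\bf T}$-module by $x\cdot\tau_n^\ell:=\tau_n^{-\ell}\cdot x$, the standard conversion over a group algebra; then in degree $n$ the tensor product becomes $X_n\otimes_{k[\B{Z}/(n+1)]}Y_n$, and the tensor relation $\tau_n^{-\ell}x\otimes y = x\otimes \tau_n^\ell y$ is precisely the diagonal coinvariance relation after renaming $x':=\tau_n^{-\ell}x$.

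Finally, the assignment $x\otimes y \mapsto 1_n\otimes x\otimes y$ on each degree $n$ descends to the required isomorphism, with inverse induced by $1_n\otimes x\otimes y\mapsto x\otimes y$. The naturality in both variables is immediate from the formulas. The main obstacle is purely conventional: one has to nail down which variable of $X_\bullet\otimes_{\bf T}Y_\bullet$ is right and which is left, and to identify the augmentation on ${\bf T}$ that generates the transposition-induced action. Once those are pinned down, the claim reduces in each degree to the familiar identity $k\otimes_{k[G]}(M\otimes N)\cong M\otimes_{k[G]}N$ for the diagonal action of a cyclic group $G$.
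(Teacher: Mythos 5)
Your proposal is correct and follows essentially the same route as the paper: decompose both tensor products degree by degree using ${\bf T}\cong\bigoplus_{n}k[\B{Z}/(n+1)]$, convert between left and right actions via the group inverse, and reduce to the standard identity $M\otimes_{k[G]}N\cong k\otimes_{k[G]}(M\odot N)$ for the diagonal action of a finite cyclic group. You supply somewhat more detail than the paper (the explicit computation of $\lambda_{X\odot Y}$ from the transpositions and the explicit mutually inverse maps), but the argument is the same.
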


\begin{proof}
  We are going to view $X_\bullet = \bigoplus_{n\in\B{N}} X_n$ as a
  left ${\bf T}$-module via the action  $\tau_n\cdot x := x\cdot\tau_n^{-\ell}$
   for any $x\in X_n$ and $\ell\in\B{Z}$.  Since 
   \[ X_\bullet\otimes_{\bf T} Y_\bullet = \bigoplus_{n\in\B{N}}
   X_n\otimes_{\B{Z}/(n+1)} Y_n \] our statement reduces to proving $
   X\otimes_G Y \cong k\otimes_G(X\odot Y) $ where $G$ is a finite
   abelian group, $X$ is a right $G$-module, $Y$ is a left $G$-module
   and $X\odot Y$ is the diagonal $G$-module $g\cdot(x\otimes y) =
   g\cdot x\otimes g\cdot y := x\cdot g^{-1}\otimes g\cdot y$ with
   $g\in G$, $x\in X$ and $y\in Y$.
\end{proof}

\begin{theorem}\label{Equivalent}
  Let $X_\bullet$ be a right $\Lambda$-module (i.e. a cyclic module) and
  $Y_\bullet$ be a left $\Lambda$-module (i.e. a cocyclic module).  Let
  $\CH_*^\lambda(X_\bullet)$ be the cyclic co-invariant quotient
  complex of the Hochschild complex of $X_\bullet$ and
  $\CH_\lambda^*(Y_\bullet)$ be the cyclic invariant subcomplex of the
  Hochschild complex of $Y_\bullet$.  Then
  \[ {\rm Tor}^{(\Lambda,{\bf T})}_*(X_\bullet,k_\bullet) \cong
     HC^\lambda_*(X_\bullet) \quad\text{ and }\quad 
     \Ext_{(\Lambda,{\bf T})}^*(k_\bullet,Y_\bullet) \cong
     HC_\lambda^*(Y_\bullet) 
     \] 
   where $HC^\lambda_*(X_\bullet)$ and $HC_\lambda^*(Y_\bullet)$ are
   the homologies of the complexes $\CH_*^\lambda(X_\bullet)$ and
   $\CH^*_\lambda(Y_\bullet)$, respectively.
 \end{theorem}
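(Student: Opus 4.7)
The plan is to construct an explicit $(\Lambda, {\bf T})$-projective resolution of $k_\bullet$ whose pairings with $X_\bullet$ and $Y_\bullet$ yield precisely the cyclic (co)invariant Hochschild (co)chain complexes. I would set
\[ P_n := \Lambda \cdot id_n \,/\, \Lambda(1-\tau_n) id_n \;\cong\; \Lambda \otimes_{{\bf T}_n} k_n, \]
where ${\bf T}_n := k[\B{Z}/(n+1)]$ is the cyclic group algebra sitting inside ${\bf T}$ at level $n$ and $k_n$ is its trivial one-dimensional module. I equip $P_*$ with the Hochschild differential $d^\CH_n = \sum_{j=0}^{n+1}(-1)^j \partial^n_j$ acting by right multiplication; the commutation relations collected in Remark~\ref{Standard} ensure that $d^\CH_n$ is left-$\Lambda$-linear and descends to the quotient.

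To see that each $P_n$ is $(\Lambda, {\bf T})$-projective, I invoke that $char(k)=0$ forces every ${\bf T}_n$ to be semi-simple, so $k_n$ is a direct summand of ${\bf T}_n$, and hence $P_n$ is a summand of $\Lambda \otimes_{{\bf T}_n}{\bf T}_n = \Lambda\cdot id_n$. By the Hochschild Lemma invoked in the proof of Proposition~\ref{CutsLikeAKnife}, modules of the form $\Lambda \otimes_{{\bf T}} N$ are $(\Lambda,{\bf T})$-projective and direct summands inherit the property. For exactness of $P_* \to k_\bullet$, I use that the transposition $\omega$ exhibits every morphism in $\Hom_\Lambda([n],[m])$ as a unique composite of a simplicial morphism followed by a cyclic twist $\tau_n^\ell$, so the projection induces a left-$\Delta$-linear isomorphism $P_n \cong \CH_n$ that commutes with the Hochschild differentials. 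Hence $P_* \to k_\bullet$ is isomorphic, as a complex of $\Delta$-modules, to $\CH_* \to k_\bullet$, which is exact by the preceding lemma; exactness being a $k$-module property, it passes to $\lmod{\Lambda}$.

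Finally, I would compute the derived functors on this resolution. For the Tor side,
\[ X_\bullet \otimes_\Lambda P_n = (X_\bullet\cdot id_n)/(X_\bullet\cdot(1-\tau_n)id_n) = X_n / X_n(1-\tau_n) = \CH^\lambda_n(X_\bullet), \]
with the Hochschild differential descending to the cyclic co-invariant differential. For the Ext side, a $\Lambda$-linear map $\Lambda\cdot id_n \to Y_\bullet$ is determined by its value $y\in Y_n$, and it vanishes on $\Lambda(1-\tau_n)id_n$ exactly when $(1-\tau_n)y=0$; hence $\Hom_\Lambda(P_n, Y_\bullet) = \CH^n_\lambda(Y_\bullet)$. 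Taking (co)homology delivers the claimed identifications.

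The main obstacle will be the descent of $d^\CH_n$ to $P_*$ together with the $\Delta$-linear identification $P_n \cong \CH_n$; both amount to combinatorial manipulations using the transposition relations of Remark~\ref{Standard} and the right ${\bf T}_n$-equivariance of the alternating sum $\sum_j(-1)^j \partial^n_j$ modulo $(1-\tau_n)$. Once these are in place, the rest of the argument is formal.
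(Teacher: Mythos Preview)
Your proposal is correct and takes essentially the same approach as the paper: both equip the Hochschild resolution $\CH_*$ with a left $\Lambda$-module structure via the transposition $\omega^{-1}$ (your $P_n=\Lambda\otimes_{{\bf T}_n}k_n$ is exactly this object), recognize it as a $(\Lambda,{\bf T})$-projective resolution of $k_\bullet$ using the factorization $\Lambda=\Delta\cdot{\bf T}$ and the semi-simplicity of ${\bf T}$, and then read off the cyclic (co)invariant Hochschild complexes upon applying $X_\bullet\otimes_\Lambda(-)$ and $\Hom_\Lambda(-,Y_\bullet)$. The only cosmetic difference is that the paper arrives at $\CH_*$ by first identifying the relative bar complex $\CB_*(\Lambda,\Lambda|{\bf T},k_\bullet)$ with $\CB_*(\Delta,\Delta,k_\bullet)$ carrying the diagonal ${\bf T}$-action and then shrinking to the small resolution, whereas you construct and verify $P_*$ directly.
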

 
\begin{proof}
  Observe that we have a basis for $\Lambda$ which consists of elements of the form
  \[ \sigma^m_{i_m}\cdots\sigma^n_{i_n} 
     \partial^n_{j_n}\cdots\partial^\ell_{j_\ell}\tau_\ell^a
     \quad\text{ where } i_m<\cdots<i_n \text{ and } j_n>\cdots>j_\ell 
  \]
  Using the transpositions $\omega$ and $\omega^{-1}$ we defined in
  Remark~\ref{Standard} we see that
  \[ \CB_n(\Lambda,\Lambda|{\bf T},k_\bullet)
      = \overbrace{\Lambda\otimes_{\bf T}\cdots
        \otimes_{\bf T}\Lambda}^{n+1\text{-times}}\otimes_{\bf T}k_{\bullet}
      \cong \overbrace{\Delta\odot\cdots
        \odot\Delta}^{n+1\text{-times}}\odot k_\bullet
      = \CB_n(\Delta,\Delta,k_\bullet)
  \]
  The left $\Delta$-module structure on $\Delta^{\odot n+1}\odot
  k_\bullet$ comes from the left regular representation of $\Delta$ on
  itself on the left-most tensor component.  The ${\bf T}$-module
  structure comes from the diagonal ${\bf T}$-module structure as
  defined in Definition~\ref{Diagonal} coming from the transposition
  $\omega^{-1}$ defined in Remark~\ref{Standard}.  Since ${\bf T}$
  is semi-simple, the resolution $\CB_*(\Delta,\Delta,k_\bullet)$ can be
  replaced by the differential $\Delta$-module $\CH_*$ which is also a
  left $\Lambda$-module structure coming from the transposition
  $\omega^{-1}$.  Then, the two sided bar complex
  $\CB_*(X_\bullet,\Lambda|{\bf T},k_\bullet)$ can be replaced by
  \[ X_\bullet\otimes_\Lambda \CH_* \cong 
     k_\bullet\otimes_{\bf T}\left(X_\bullet\otimes_\Delta\CH_*\right)
     = \CH^\lambda_*(X_\bullet) 
  \] 
  The proof for $\Ext_{(\Lambda,{\bf T})}^*(k_\bullet,Y_\bullet)$ is
  similar.
\end{proof}

\begin{proposition}\label{Comparison}
  We have the natural isomorphisms of derived functors
  \[ c_*^{\ \cdot\ ,\ \cdot\ }\colon {\rm Tor}^\Lambda_*(\ \cdot\ ,\ \cdot\ )
     \to {\rm Tor}^{(\Lambda,{\bf T})}_*(\ \cdot\ ,\ \cdot\ )
     \qquad
     c^*_{\ \cdot\ ,\ \cdot\ }\colon \Ext_{(\Lambda,{\bf T})}^*(\ \cdot\ ,\ \cdot\ )
     \to \Ext_{\Lambda}^*(\ \cdot\ ,\ \cdot\ )
  \]
\end{proposition}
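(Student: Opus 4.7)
The plan is to reduce Proposition~\ref{Comparison} to Proposition~\ref{CutsLikeAKnife} using the colimit description of $\Lambda$ from Remark~\ref{Lift}, after verifying that ${\bf T}$ is semi-simple. The first step is the semi-simplicity claim. From the relations $\tau_n\tau_n^\ell = \tau_n^{\ell+1}$ and $\tau_n^{n+1} = id_n$, the subalgebra ${\bf T}$ decomposes as a direct sum $\bigoplus_{n\geq 0} k[\tau_n]/(\tau_n^{n+1} - 1) \cong \bigoplus_{n\geq 0} k[\B{Z}/(n+1)\B{Z}]$ of group algebras of finite cyclic groups. Since $char(k) = 0$, Maschke's theorem gives that each summand is semi-simple. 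Consequently, for every $U\in\B{F}$, the unital subalgebra ${\bf T}_U := {\bf T}\cap \Lambda_U$ is a finite direct sum of such group algebras, hence a semi-simple unital subalgebra of the unital $k$-algebra $\Lambda_U$.

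Next, for each fixed $U\in\B{F}$, the pair $({\bf T}_U \subseteq \Lambda_U)$ satisfies the hypotheses of Proposition~\ref{CutsLikeAKnife}. Applying that proposition and its proof (using the two-sided bar and relative bar resolutions) yields natural isomorphisms
\[ c^{X_U,Y_U}_*\colon \Tor^{\Lambda_U}_*(X_U, Y_U) \xrightarrow{\cong} \Tor^{(\Lambda_U,{\bf T}_U)}_*(X_U, Y_U), \]
and similarly isomorphisms in the Ext direction, for any restrictions $X_U = {\rm Res}^\Lambda_{\Lambda_U} X$ and $Y_U = {\rm Res}^\Lambda_{\Lambda_U} Y$ of locally finite faithful $\Lambda$-modules.

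Finally, I would lift the argument to $\Lambda$ following Remark~\ref{Lift}. Any locally finite faithful $\Lambda$-module $X$ satisfies $X = \colim_{U\in\B{F}} X_U$; the ordinary bar complex $\CB_*(X, \Lambda, Y)$ and the relative bar complex $\CB_*(X, \Lambda \mid {\bf T}, Y)$ are each, levelwise, filtered colimits over $U\in\B{F}$ of their counterparts $\CB_*(X_U, \Lambda_U, Y_U)$ and $\CB_*(X_U, \Lambda_U \mid {\bf T}_U, Y_U)$. Since filtered colimits are exact and commute with homology, the comparison map $c^{X,Y}_*$ over $\Lambda$ is the filtered colimit of the isomorphisms $c^{X_U,Y_U}_*$, and therefore is itself a natural isomorphism. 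The Ext case is handled dually: the same levelwise colimit description of the relative bar resolution gets turned into a limit under $\Hom_\Lambda(-, Z)$, so the comparison $c^*_{X,Z}$ is an inverse limit of isomorphisms, hence an isomorphism. The principal technical point — and the one that requires the most care — is confirming that the relative bar resolution $\CB_*(\Lambda, \Lambda \mid {\bf T}, -)$ is compatible with the colimit over $U\in\B{F}$ in the sense needed, so that Proposition~\ref{CutsLikeAKnife} transfers cleanly from the unital setting to the $H$-unital algebra $\Lambda$; everything else is formal naturality.
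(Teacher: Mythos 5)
Your proposal is correct and follows essentially the same route as the paper: the paper's own proof simply observes that ${\bf T}$ is semi-simple because $char(k)=0$ and then invokes Proposition~\ref{CutsLikeAKnife} together with the colimit-lifting device of Remark~\ref{Lift}. You have merely filled in the details (the decomposition of ${\bf T}$ as a sum of cyclic group algebras, Maschke's theorem, and the compatibility of the bar and relative bar complexes with the filtered colimit over $U\in\B{F}$) that the paper leaves implicit.
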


\begin{proof}
  We observe that ${\bf T}$ is a semi-simple subalgebra of $\Lambda$
  since we assume $char(k)=0$ throughout.  Now we use
  Proposition~\ref{CutsLikeAKnife} and Remark~\ref{Lift}.
\end{proof}

\section{Pairings in Cyclic (Co)Homology}\label{Pairings}

In this section we will use the notation and the terminology of
\cite{Kaygun:CupProduct} and \cite{Kaygun:BivariantHopf} with the
simplification that we use the same $k_\bullet$ for the trivial left
and right $\Lambda$-module.  In particular, $C_\bullet(Z,M)$ will
denote the (co)cyclic module (i.e. $\Lambda$-module) associated with a
$H$-module (co)algebra $Z$ with coefficients in an arbitrary
$H$-module/comodule $M$.  Since the category of $\Lambda$-modules is
abelian, Hopf-cyclic (co)homology of (co)cyclic modules are specific
derived functors on this category
\[  HC_{\rm Hopf}^*(A,M) = \Ext_\Lambda^*(C_\bullet(A,M),k_\bullet) 
    \quad\text{ and }\quad
    HC_{\rm Hopf}^*(C,M) = \Ext_\Lambda^*(k_\bullet,C_\bullet(C,M)) 
\]
for an arbitrary $H$-module algebra $A$ and $H$-module coalgebra $C$.

We recall the following definition from
\cite[Def. 2.2]{Kaygun:CupProduct} to fix notation: $C$ is said to act
on $A$ if there is a morphism of $k$-modules $\triangleright\colon
C\otimes A\to A$ which satisfies (i) $c\triangleright(a_1a_2)=
(c^{(1)}\triangleright a_1)(c^{(2)}\triangleright a_2)$ and (ii) $c\tr
1_A = \varepsilon(c) 1_A$ for any $a_1,a_2\in A$ and $c\in C$.  The
action is called $H$-equivariant if $h(c\tr a) = h(c)\tr a$ for any
$h\in H$, $a\in A$ and $c\in C$ where we use $h(c)$ to denote the
action of $H$ on the module (co)algebra $C$.

We obtained the following result in
\cite[Prop. 2.7]{Kaygun:CupProduct}.
\begin{lemma}\label{ModulePairing}
  Assume $C$ acts on $A$ equivariantly.  The morphism of graded
  $k$-modules
  \[\phi_\bullet \colon Cyc_\bullet(A) \to diag_\bullet\Hom_k(C_\bullet(C,M),C_\bullet(A,M))
  \] defined for $a_0\otimes\cdots\otimes a_n\otimes m\in C_n(A,M)$ and
  $c_0\otimes\cdots\otimes c_n\otimes m\in C_n(C,M)$ for any $n\geq 0$
  by
  \[ \phi_n(a_0\otimes\cdots\otimes a_n)(c_0\otimes\cdots\otimes c_n\otimes m)
     = c_0\tr a_0\otimes\cdots\otimes c_n\tr a_n\otimes m
  \]
  is a morphism of cyclic modules.  
\end{lemma}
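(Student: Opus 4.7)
The plan is to check that $\phi_\bullet$ intertwines every generator of the cyclic operator algebra $\Lambda$, since a morphism of cyclic modules amounts to a $\Lambda$-equivariant graded $k$-linear map and both $Cyc_\bullet(A)$ and $diag_\bullet\Hom_k(C_\bullet(C,M),C_\bullet(A,M))$ are right $\Lambda$-modules. Unpacking the right action on the target from the construction preceding Proposition~\ref{DiagonalHom}, one has $(\psi\cdot f)(y)=\psi(f\tr y)\tl f$ for any generator $f$, so the intertwining reduces to verifying the identity
\[ \phi(x\tl f)(y)=\phi(x)(f\tr y)\tl f \]
for every generator $f$ of $\Lambda$, where $f\tr y$ denotes the left action on the cocyclic module $C_\bullet(C,M)$ and $\tl f$ denotes the right action on the cyclic modules $Cyc_\bullet(A)$ and $C_\bullet(A,M)$.

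The verification then splits into three cases according to the families $\partial_j^n$, $\sigma_i^n$, $\tau_n$ of generators, and each case reduces to exactly one of the three defining properties of an $H$-equivariant action of $C$ on $A$. For an inner face $\partial_j^n$ with $0\le j<n$, the cyclic face multiplies two consecutive $A$-factors while the cocyclic coface applies the coproduct $\Delta(c_j)=c_j^{(1)}\otimes c_j^{(2)}$, so after substituting the definition of $\phi$ the identity collapses to the multiplicativity axiom $c_j\tr(a_j a_{j+1})=(c_j^{(1)}\tr a_j)(c_j^{(2)}\tr a_{j+1})$, i.e.\ part (i) of the action. For a degeneracy $\sigma_i^n$, the cyclic side inserts $1_A$ while the cocyclic side drops a tensor factor via $\varepsilon$, and the identity collapses to the unit axiom $c\tr 1_A=\varepsilon(c)1_A$, i.e.\ part (ii). For the cyclic generator $\tau_n$ and the outer face $\partial_n^n$, the cyclic rotation is accompanied by an $H$-action on the coefficient $M$ and on the extreme tensor factors, and here the essential ingredient is the $H$-equivariance $h(c\tr a)=h(c)\tr a$ together with the compatibility of the $H$-module/comodule structure on $M$ that is built into the Hopf-cyclic operators on $C_\bullet(A,M)$ and $C_\bullet(C,M)$.

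The hard part is purely combinatorial rather than conceptual: one must carefully track Sweedler indices, the $H$-action on $M$ produced by the cyclic generator, and the cyclic shift of tensor positions so that both sides of each identity match term by term. Once the three axioms—multiplicativity, unit preservation, and $H$-equivariance—are placed correctly within the explicit formulas for the Hopf-cyclic operators, every case reduces to routine bookkeeping and the claim follows.
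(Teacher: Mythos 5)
The paper gives no proof of this lemma at all: it is imported verbatim from \cite[Prop.~2.7]{Kaygun:CupProduct}, and your generator-by-generator verification of the intertwining identity $\phi(x\tl f)(y)=\phi(x)(f\tr y)\tl f$ against $\partial^n_j$, $\sigma^n_i$, $\tau_n$ is exactly the argument carried out there, with each case closed by the axiom you name (multiplicativity for inner cofaces, the unit axiom for degeneracies, $H$-equivariance for $\tau_n$ and the extremal face). The one place your sketch stops short of a proof is precisely the case you defer: for $\tau_n$ and $\partial^n_n$ the Hopf-cyclic operators on $C_\bullet(C,M)$ and $C_\bullet(A,M)$ must be written out explicitly (they involve the $H$-coaction on $M$ and the antipode), and the matching of the two sides there is the only part of the lemma with real content rather than bookkeeping; asserting that equivariance plus ``the compatibility built into the operators'' suffices is correct but is the step a referee would ask you to display.
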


\begin{theorem}\label{CohomologyPairing}
  The morphism of cyclic modules $\phi_\bullet$ we defined in
  Lemma~\ref{ModulePairing} induces a pairing of the form
  \[ (\ \cdot\ \smile\ \cdot\ ) \colon HC^p_{\rm
    Hopf}(C,M)\otimes HC^q_{\rm Hopf}(A,M)\to HC^{p+q}(A) \] for any
  $p,q\geq 0$ where we use $HC^*_{\rm Hopf}$ to denote Hopf-cyclic
  cohomology and $HC^*$ to denote the ordinary cyclic cohomology
  functors.
\end{theorem}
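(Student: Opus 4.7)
The proof fits the template of Section~\ref{DiagramModules}: produce the pairing by applying the universal pairing of Proposition~\ref{DiagonalHom} at the abelian category level, then pulling back by $\phi_\bullet$ from Lemma~\ref{ModulePairing}, and finally identifying the endpoint with ordinary cyclic cohomology.

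More concretely, first I would specialize Proposition~\ref{DiagonalHom} to ${\bf C}=\Lambda$, with the cocyclic module $C_\bullet(C,M)$ playing the role of the left $\Lambda$-module and the cyclic module $C_\bullet(A,M)$ playing the role of the right $\Lambda$-module. Via Theorem~\ref{NaturalTransformation}, this yields a natural pairing
\[ \Ext_\Lambda^p(k_\bullet,C_\bullet(C,M))\otimes\Ext_\Lambda^q(C_\bullet(A,M),k_\bullet)\to\Ext_\Lambda^{p+q}\bigl(diag_\bullet\Hom_k(C_\bullet(C,M),C_\bullet(A,M)),k_\bullet\bigr). \]
By the Hopf-cyclic identifications recalled at the start of this section, the two left-hand tensor factors are $HC^p_{\rm Hopf}(C,M)$ and $HC^q_{\rm Hopf}(A,M)$.

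Second, I would pull back along $\phi_\bullet$. Since $\Ext_\Lambda^{p+q}(\,\cdot\,,k_\bullet)$ is contravariant in its first argument, the morphism of cyclic modules $\phi_\bullet$ of Lemma~\ref{ModulePairing} induces a natural map
\[ \phi_\bullet^*\colon\Ext_\Lambda^{p+q}\bigl(diag_\bullet\Hom_k(C_\bullet(C,M),C_\bullet(A,M)),k_\bullet\bigr)\to\Ext_\Lambda^{p+q}(Cyc_\bullet(A),k_\bullet). \]
Composing with the pairing above produces a natural pairing with target $\Ext_\Lambda^{p+q}(Cyc_\bullet(A),k_\bullet)$.

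Third, I would identify $\Ext_\Lambda^{p+q}(Cyc_\bullet(A),k_\bullet)$ with $HC^{p+q}(A)$. This is the $H=k$, $M=k$ specialization of the formula $HC^*_{\rm Hopf}(A,M)=\Ext_\Lambda^*(C_\bullet(A,M),k_\bullet)$ quoted at the start of this section, since then $C_\bullet(A,k)=Cyc_\bullet(A)$. Alternatively, one can route through Proposition~\ref{Comparison} (valid because $char(k)=0$) to convert to $\Ext_{(\Lambda,{\bf T})}^{p+q}(Cyc_\bullet(A),k_\bullet)$ and then invoke the cohomological counterpart of Theorem~\ref{Equivalent} to reach $HC^{p+q}(A)$. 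The main obstacle is conceptual rather than computational: essentially everything is already packaged into Proposition~\ref{DiagonalHom} and Theorem~\ref{NaturalTransformation}, and what is left is just checking the naming at the endpoints together with the naturality of $\phi_\bullet^*$, which follows from the naturality of $\phi_\bullet$ already recorded in Lemma~\ref{ModulePairing}.
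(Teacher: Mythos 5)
Your proposal is correct and follows exactly the route of the paper's own (one-line) proof: specialize Proposition~\ref{DiagonalHom} to ${\bf C}=\Lambda$ with $C_\bullet(C,M)$ and $C_\bullet(A,M)$ as inputs, then compose with the contravariant map $\phi_\bullet^*$ induced by Lemma~\ref{ModulePairing} and identify the target with $HC^{p+q}(A)$. You have simply spelled out the details the paper leaves implicit; no discrepancy to report.
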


\begin{proof}
  The pairing comes from Proposition~\ref{DiagonalHom} followed by
  Lemma~\ref{ModulePairing}.  
\end{proof}

\begin{remark}
  Theorem~\ref{CohomologyPairing} gives us a pairing defined in ${\bf
    D}(\lmod{\Lambda})$ the derived category of $\Lambda$-modules.
  Note that the pairing we obtain in Proposition~\ref{DiagonalHom} for
  the case ${\bf C}=\Lambda$ Connes' cyclic category, can easily be
  obtained in the relative derived category of cyclic modules ${\bf
    D}(\lmod{(\Lambda,{\bf T})})$, and with some work~\cite[Lem. 5.2,
  Lem. 5.3]{Kaygun:CupProduct} also in ${\bf D}(\lmod{\C{M}})$ the
  derived category of mixed complexes.  Thus followed by the induced
  map of $\phi_\bullet$ in cohomology we obtain similar pairings
  defined in ${\bf D}(\lmod{(\Lambda,{\bf T})})$ and ${\bf
    D}(\lmod{\C{M}})$.  In \cite[Thm. 5.4]{Kaygun:CupProduct} we
  showed that the pairing we construct here and the pairing
  constructed in the derived category of mixed complexes are naturally
  isomorphic.  Now, Proposition~\ref{Comparison} gives us the natural
  isomorphism between the pairing we construct here and the pairing
  constructed in the relative derived category of cyclic modules.

  Our aim is to show that pairings defined in
  \cite{ConnesMoscovici:HopfCyclicCohomology,
    Gorokhovsky:SecondaryCharacteristicClasses,
    Crainic:CyclicCohomologyOfHopfAlgebras,
    KhalkhaliRangipour:CupProducts, Kaygun:CupProduct,
    Sharygin:CupProducts, Rangipour:CupProductsI} in Hopf-cyclic
  cohomology are naturally isomorphic as natural transformations of
  double functors.  There are certain variations between these
  pairings: The Connes-Moscovici, Gorokhovsky and Crainic pairings are
  defined for $C=H$, $q=0$ and only for $M=k_{\sigma,\delta}$ the
  canonical 1-dimensional SAYD module associated with a modular pair
  involution in $H$.  The Rangipour-Khalkhali pairing, and the pairing
  Rangipour defined in \cite{Rangipour:CupProductsI}, are defined for
  an arbitrary module coalgebra $C$ acting equivariantly on a module
  algebra $A$, and for arbitrary bi-degree $(p,q)$ with an arbitrary
  SAYD module as coefficients~\cite{HKRS:SaYDModules}.  Finally, the
  pairing we defined in \cite{Kaygun:CupProduct} and here in
  Theorem~\ref{CohomologyPairing} work in the same setup as the
  Khalkhali-Rangipour and Rangipour pairings but we allow arbitrary
  coefficient module/comodules.

  The original pairing in Hopf-cyclic cohomology as defined by Connes
  and Moscovici~\cite{ConnesMoscovici:HopfCyclicCohomology} is
  constructed on the (co)cyclic module level, and $(b,B)$-complex is
  utilized to compute the Hopf-cyclic classes used as its input.  This
  is done in the derived the category of mixed
  complexes~\cite{JonesKassel:BivariantCyclicTheory}.  Crainic, and
  later Nikonov and Sharygin defined their version of the pairing
  using Cuntz-Quillen formalism of
  $X$-complexes~\cite{CuntzQuillen:NonsingularityII}.  This is done in
  the homotopy category of towers of super complexes which is homotopy
  equivalent to the derived category of mixed complexes by
  Quillen~\cite{Quillen:CyclicHomologyType}.  In their setup
  Gorokhovsky~\cite{Gorokhovsky:SecondaryCharacteristicClasses}, and
  later Khalkhali and Rangipour~\cite{KhalkhaliRangipour:CupProducts}
  also used mixed complexes to obtain their cohomology classes, and
  ($H$-invariant) closed graded (co)traces to implement their
  pairings.  This is akin to Connes' use of closed graded traces to
  implement the ordinary cup product in cyclic cohomology \cite[III.1,
  Thm. 12]{Connes:Book}.  In \cite{Kaygun:CupProduct} we used both the
  derived category of (co)cyclic modules and the derived category of
  mixed complexes to construct pairings as natural transformations of
  derived double functors, and we defined a comparison natural
  transformation between these derived functors which were
  isomorphisms in the cases we are interested.  Independently,
  Rangipour developed another version of the cup product on the level
  of (co)cyclic modules~\cite{Rangipour:CupProductsI} similar
  to~\cite{Kaygun:CupProduct}.

  In Theorem~\ref{Equivalent}, we gave an interpretation of the cyclic
  cohomology computed via cyclic invariants of Hochschild cocycles as
  a derived functor using a relative derived category.  Then in
  Proposition~\ref{Comparison} we defined a comparison natural
  transformation between ordinary and relative derived functors which
  is an isomorphism.  The primary reason we are interested in relative
  cyclic cohomology is the fact that ($H$-invariant) closed graded
  traces are in one-to-one correspondence (cf. \cite[III.1$\alpha$,
  Prop. 4]{Connes:Book} and \cite[Lem. 2.2. and
  Lem. 2.3]{KhalkhaliRangipour:CupProducts}) with cyclic- and
  $H$-invariant Hochschild cocycles which are used to implement some
  of the pairings we enumerated above.  Thus
  Proposition~\ref{Comparison} provides the crutial comparison natural
  transformation between the pairing we define in here and
  \cite{Kaygun:CupProduct}, and aforementioned pairings.
\end{remark}

\begin{theorem}
  Let $A$ be a $H$-module algebra and $C$ be a $H$-module coalgebra
  acting on $A$ equivariantly over $H$.  The pairings defined in
  \cite{ConnesMoscovici:HopfCyclicCohomology,
    Gorokhovsky:SecondaryCharacteristicClasses,
    Crainic:CyclicCohomologyOfHopfAlgebras,
    KhalkhaliRangipour:CupProducts, Kaygun:CupProduct,
    Sharygin:CupProducts, Rangipour:CupProductsI} are naturally
  isomorphic as natural transformations of isomorphic double functors.
\end{theorem}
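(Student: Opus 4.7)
The plan is to bring all seven pairings into a common derived category and then reduce the question of their agreement to a single degree by means of Lemma~\ref{Agreement}.

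First, I would use the sequence of comparison isomorphisms already assembled in this paper to place every pairing in the same home. Proposition~\ref{Comparison} gives a natural isomorphism between the derived functors on ${\bf D}(\lmod{\Lambda})$ and those on the relative derived category ${\bf D}(\lmod{(\Lambda,{\bf T})})$; Theorem~\ref{Equivalent} identifies the latter with cyclic cohomology computed via cyclic-invariant Hochschild cochains, which is exactly the format in which the closed graded $H$-invariant traces used by Connes--Moscovici, Gorokhovsky, Crainic, and Khalkhali--Rangipour appear; and \cite[Thm.~5.4]{Kaygun:CupProduct} together with~\cite{Quillen:CyclicHomologyType} handles the remaining passages between ${\bf D}(\lmod{\Lambda})$, ${\bf D}(\lmod{\C{M}})$ and the homotopy category of towers of super complexes used by Crainic, Nikonov and Sharygin. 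Under these identifications every pairing becomes a candidate natural transformation of the shape
\[ \eta_i^{p,q}\colon \Ext_\Lambda^p(k_\bullet,C_\bullet(C,M))\otimes \Ext_\Lambda^q(C_\bullet(A,M),k_\bullet)\to \Ext_\Lambda^{p+q}(Cyc_\bullet(A),k_\bullet), \]
with the understanding that for the Connes--Moscovici, Gorokhovsky and Crainic pairings one first restricts to $C=H$, $q=0$ and $M=k_{\sigma,\delta}$.

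Second, I would check that each $\eta_i^{p,q}$ is compatible with the $\delta$-structures in both variables, so that Lemma~\ref{Agreement} applies to the difference $\eta_i^{p,q}-\eta_j^{p,q}$. The pairing of Theorem~\ref{CohomologyPairing} is compatible by construction via Theorem~\ref{NaturalTransformation} and Proposition~\ref{DiagonalHom}. For the remaining pairings the compatibility follows from their explicit constructions, which in each case factor through a morphism of (co)cyclic modules or of mixed complexes and an external product that is itself a natural transformation of $\delta$-functors in each slot; the Alexander--Whitney formula recalled in the remark after Proposition~\ref{DiagonalHom} provides the common chain-level model to which each recipe can be matched.

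Third, I would apply Lemma~\ref{Agreement} to any pair of these natural transformations. It suffices to verify that $\eta_i^{p,q}=\eta_j^{p,q}$ on a single antidiagonal $p+q=n$, and I would take $n=0$: at $(p,q)=(0,0)$ every pairing in the list reduces, by the very fact that it extends the Connes--Moscovici characteristic map, to the same map $HC_{\rm Hopf}^0(C,M)\otimes HC_{\rm Hopf}^0(A,M)\to HC^0(A)$ induced by the action $\triangleright$ of $C$ on $A$ followed by a trace. Lemma~\ref{Agreement} then upgrades this coincidence in bidegree $(0,0)$ to $\eta_i^{p,q}=\eta_j^{p,q}$ for every $p,q\geq 0$, which is the claimed uniqueness. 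The main obstacle is the second step: the seven pairings are built by genuinely different recipes (closed $H$-traces on $X$-complexes, Alexander--Whitney on (co)cyclic modules, cup products on mixed complexes), and one must trace each construction through the comparison isomorphisms of Section~\ref{CyclicCohomology} to see that the resulting map is a natural transformation of double $\delta$-functors and not merely a family of bilinear maps natural in the endpoints. Once this verification is done, the universal property packaged in Lemma~\ref{Agreement} makes the final identification automatic.
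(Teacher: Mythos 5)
Your overall skeleton (pass everything through the comparison isomorphisms, then use Lemma~\ref{Agreement} to reduce to bidegree $(0,0)$) is the paper's strategy, but there is a genuine gap in the way you invoke Lemma~\ref{Agreement}. You propose to apply it to the differences $\eta_i^{p,q}-\eta_j^{p,q}$ of the \emph{composite} pairings, whose target is $\Ext_\Lambda^{p+q}(Cyc_\bullet(A),k_\bullet)$. But Lemma~\ref{Agreement} is a statement about natural transformations of double functors on an abelian product category $\C{U}\otimes\C{V}$, and its proof runs a dimension shift along short exact sequences $0\leftarrow U''\leftarrow P\leftarrow U'\leftarrow 0$ with $P$ projective; this requires the transformation to be defined and natural at $P$ and $U''$, which are generic objects of $\lmod{\Lambda}$ and not of the form $C_\bullet(C',M)$. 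The composite pairing is only a bilinear map attached to the specific objects $(C_\bullet(C,M),C_\bullet(A,M))$ in the non-abelian image of $C_\bullet$ --- exactly the obstruction the introduction warns about --- so the lemma does not apply to it as stated. You flag this as ``the main obstacle'' but do not resolve it. The paper's resolution is the structural device you omit: factor every pairing as an \emph{external part}
\[ \Ext_\Lambda^p(k_\bullet,\ \cdot\ )\otimes\Ext_\Lambda^q(\ \cdot\ ,k_\bullet)\to \Ext_\Lambda^{p+q}(diag_\bullet\Hom_k(\ \cdot\ ,\ \cdot\ ),k_\bullet), \]
which \emph{is} a natural transformation of double functors on all of $\lmod{\Lambda}\otimes\lmod{\Lambda}$ and to which Lemma~\ref{Agreement} legitimately applies, followed by an \emph{internal part} $\Ext^{p+q}(\phi_\bullet,k_\bullet)$ induced by the single fixed morphism of cyclic modules $\phi_\bullet\colon Cyc_\bullet(A)\to diag_\bullet\Hom_k(C_\bullet(C,M),C_\bullet(A,M))$ of Lemma~\ref{ModulePairing}.

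With that decomposition in hand, two separate verifications remain, and your proposal handles neither in a checkable way. For the external parts, agreement at $(0,0)$ is not automatic from ``extending the characteristic map'' (the characteristic map is the $q=0$ column, and what must be compared are the actual degree-zero formulas); the paper verifies it by exhibiting the common expression $(\alpha\smile\beta)(f)=\sum_i\alpha'(f(c_i)\otimes m_i)$ and citing its occurrence in each of the seven references. For the internal parts, one must check that all seven constructions use literally the same $\phi_\bullet$ (via \cite[Prop.~2.4 and Prop.~2.7]{Kaygun:CupProduct} and the corresponding formula in \cite{Rangipour:CupProductsI}); this is an independent step, since Lemma~\ref{Agreement} says nothing about it. Your first step (assembling Proposition~\ref{Comparison}, Theorem~\ref{Equivalent} and \cite[Thm.~5.4]{Kaygun:CupProduct} to move between the ambient derived categories) does match the paper. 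If you insert the external/internal factorization and carry out the two degree-zero verifications explicitly, your argument becomes the paper's proof.
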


\begin{proof}
  All of the pairings enumerate above are composed of two parts
  \[ \Ext^p(k_\bullet,X_\bullet)\otimes\Ext^q(Y_\bullet,k_\bullet) 
    \xra{\eta^{p,q}}
     \Ext^{p+q}(diag_\bullet\Hom_k(X_\bullet,Y_\bullet),k_\bullet)
     \xra{\Ext^{p+q}(\phi,k_\bullet)} \Ext^{p+q}(Z_\bullet,k_\bullet)
  \] 
  \begin{enumerate}
  \item An {\em external part} which mixes a Hopf-cyclic cohomology
    class of the module coalgebra $C$ and a Hopf-cyclic cohomology
    class of the module algebra $A$, as we do in
    Proposition~\ref{DiagonalHom}, to produce an abstract cyclic
    cohomology class which is not necessarily a Hopf-cyclic class of a
    module (co)algebra

  \item An {\em internal part} which interprets the new class we
    obtained as an ordinary cyclic cohomology class of the module
    algebra $A$ as we do in Theorem~\ref{CohomologyPairing}.
  \end{enumerate}
  Lemma~\ref{Agreement} allows us to conclude that the external parts
  of all such pairings agree everywhere provided that they agree on
  the bi-degree $(p,q) = (0,0)$.  So, we consider two Hopf-cyclic
  classes $\alpha\in HC^0_{\rm Hopf}(C,M) =
  \Ext_{\Lambda}^0(k_\bullet, C_\bullet(C,M))$ and $\beta\in HC^0_{\rm
    Hopf}(A,M) = \Ext_\Lambda^0(C_\bullet(A,M),k_\bullet)$.  The first
  class $\alpha$ can be represented with a $k$-linear morphism
  $\alpha'\colon A\otimes M\to k$ in $\Hom_k(A\otimes M, k)$ and the
  second via an element $\beta' = \sum_i c_i\otimes m_i$ in $C\otimes
  M$ which have invariance properties with respect to the diagonal
  action of $H$.  Then the same formula which defines
  $HC^0(\phi_\bullet)$
  \[ (\alpha\smile\beta)(f) = \sum_i \alpha'(f(c_i)\otimes m_i) \] for
  the specific case $f\in \Hom_k(C,A)$ given by $f(c):= c\tr a$ with
  $a\in A$, is used by Connes-Moscovici~\cite[VIII,
  Prop.1]{ConnesMoscovici:HopfCyclicCohomology}, by
  Gorokhovsky~\cite[Sect. 3,
  Eq. 3.11]{Gorokhovsky:SecondaryCharacteristicClasses}, by
  Crainic~\cite[Sect. 4.6,
  Eq. 20]{Crainic:CyclicCohomologyOfHopfAlgebras}, by
  Khalkhali-Rangipour~\cite[Sect. 5]{KhalkhaliRangipour:CupProducts},
  and by Nikonov-Sharygin~\cite[Sect. 3.3]{Sharygin:CupProducts}.
  Thus we also observe that these pairings use the same internal part
  $\phi_\bullet\colon Cyc_\bullet(A)\to
  diag_\bullet\Hom_k(C_\bullet(C,M),C_\bullet(A,M))$ which comes from
  the fact that $C$ acts on $A$ equivariantly by \cite[Prop. 2.4 and
  Prop. 2.7]{Kaygun:CupProduct}.  In~\cite{Rangipour:CupProductsI}
  Rangipour splits his cup product into two pieces as we do here and
  in~\cite{Kaygun:CupProduct}. The external part of his pairing
  defined in \cite[Sect. 2, Eq. 2.11]{Rangipour:CupProductsI}, and the
  internal part defined in~\cite[Sect. 2,
  Eq. 2.13]{Rangipour:CupProductsI} are identical with ours.  Note
  that even though the formulae agree, the computations are performed
  in different derived categories.  Not all of these categories are
  homotopy equivalent but we have comparison natural transformations
  which are isomorphisms between the derived double functors evaluated
  on the objects we are interested~(Proposition~\ref{Comparison} and
  \cite[Thm. 5.4]{Kaygun:CupProduct}).
\end{proof}

\begin{remark}
  The pairing we defined in Theorem~\ref{CohomologyPairing} can be
  easily extended to the periodic Hopf-cyclic cohomology.  Note that
  cyclic cohomology groups computed here either via the derived
  functors of $\Hom_\Lambda(\ \cdot\ ,k_\bullet)$ or
  $\Hom_\Lambda(k_\bullet, \ \cdot\ )$ are naturally graded modules
  over the graded algebra $\Ext_\Lambda^*(k_\bullet,k_\bullet)$ which
  is a polynomial algebra over one generator of degree $\pm
  2$~\cite[Cor. 7]{Connes:ExtFunctors}, which we will denote by $S$.
  This generator implements the periodicity
  operator~\cite[Lem. 8]{Connes:ExtFunctors}, which really is a
  natural transformation of functors of the form $S\colon HC^p (\
  \cdot\ )\to HC^{p\pm 2}(\ \cdot\ )$.  Now using
  \cite[Cor. 1.4]{Nistor:BivariantChernConnes} we conclude that our
  pairing is a morphism of $S$-modules, i.e. compatible with the
  periodicity morphism.  Or we can use
  \cite[Thm. 14]{Sharygin:CupProducts} to prove the pairing defined in
  the derived category of mixed complexes is a morphism of
  $S$-modules, and we transport the action to the pairing defined in
  the derived category of $\Lambda$-modules.  This means the functor
  $HC^*$ in the pairing we defined above can be replaced by $HP^*$ to
  obtain a periodic version.
\end{remark}


\end{document}